\definecolor{cupgreen}{rgb}{0,0.498,0.208}
\definecolor{cupblue}{rgb}{0,0,.5}
\definecolor{cupred}{rgb}{1,0.04,0}
\definecolor{cuppink}{rgb}{0.925,0,0.545}
\definecolor{cupmagenta}{rgb}{0.624,0.161,0.424}
\definecolor{cupbrown}{rgb}{0.71,0.212,0.133}
\definecolor{cupgreen}{rgb}{0,0,0}
\definecolor{cupblue}{rgb}{0,0,0}
\definecolor{cupred}{rgb}{0,0,0}
\definecolor{cuppink}{rgb}{0,0,0}
\definecolor{cupmagenta}{rgb}{0,0,0}
\definecolor{cupbrown}{rgb}{0,0,0}
\definecolor{TITLE}{rgb}{0,0,0}
\definecolor{midblue}{rgb}{0.00,0.0,0.80}
\definecolor{darkblue}{rgb}{0.00,0.00,0.45}
\definecolor{SECTION}{rgb}{0.50,0.00,1.00}
\definecolor{THM}{rgb}{0.8,0,0.1}
\definecolor{SEC}{rgb}{0,0,1}
\newcommand{\aut}{\mathrm{Aut}}
\newtheorem{theorem}{{\color{THM} Theorem}}[section]
\newtheorem{lemma}[theorem]{{\color{THM}Lemma}}
\newtheorem{corollary}[theorem]{{\color{THM}Corollary}}
\theoremstyle{definition}
\newtheorem{definition}[theorem]{{\color{THM}Definition\ }}
\newtheorem{example}[theorem]{{\color{THM}Example}}
\numberwithin{equation}{section}
\date{}
\title{Distinguishing Polynomials of Graphs}
\author[1]{M. H. Shirdareh Haghighi\thanks{shirdareh@shirazu.ac.ir}}
\author[1]{A. M. Ghazanfari\thanks{amir.m.ghazanfari@gmail.com}}
\author[1]{S. A. Talebpour Shirazi Fard\thanks{seyed.alireza.talebpour@gmail.com}}
\affil[1]{Department of Mathematics, Shiraz University, Shiraz, Iran}
\begin{document}
	\maketitle



	\begin{abstract}
	For a graph $G$,  a $k$-coloring $c:V(G)\to \{1,2,\ldots, k\}$ is called distinguishing, if the only automorphism $f$ of $G$ with the property $c(v)=c(f(v))$ for every vertex $v\in G$ (color-preserving automorphism), is the identity. In this paper, we show that 
the number of distinguishing $k$-colorings of $G$ is a monic polynomial in $k$, calling it the distinguishing polynomial of $G$. Furthermore, we compute the distinguishing polynomials of cycles and complete multipartite graphs. We also show that the multiplicity of zero as a root of the distinguishing polynomial of $G$ is at least the number of orbits of $G$. \\

\noindent	{\bf Keywords}: { distinguishing number, distinguishing polynomial, equivalent colorings }\\ 
	
\noindent	{\bf AMS subject classification}: {05C31, 05C15, 05C30}%
	\end{abstract}

\section{Introduction, The Distinguishing Polynomial}

Let $G$ be a simple graph.  A $k$-coloring $c:V(G) \to \{1,2,\ldots, k\}$  is called distinguishing, if the only automorphism of $G$ with the property $c(v)=c(f(v))$ for every vertex $v\in G$  is the identity.
In other words,   the vertices of $G$ are colored such that each non-identity automorphism of $G$ changes the color of some vertex. Equivalently, sometimes we say, a distinguishing coloring breaks all symmetries of $G.$ In 1996, the pioneer work of Albertson \cite{albertson}
is published and after that this notion is studied and extended by many authors.  


The distinguishing number of $G$, denoted by $D(G)$, is the minimum $k$ for which a distinguishing $k$-coloring exists. Since the automorphisms of the graph $G^c$ are the same as those of $G$, we have $D(G) = D(G^c)$. 

In \cite{kwz} the upper bound $\Delta(G)+1$ is given for $D(G)$, with equlity only for complete graphs, balanced complete multipartite graphs $K_{n,n}$ and $C_5$. Distinguishing graphs by edge colorings and total colorings are  introduced in \cite{edge} and \cite{total}.  Infinite graphs are also studied \cite{imrich}. Our main reference is \cite{ahmadi2020number} where nonequivalent distinguishing colorings is defined.




Let $c: V(G) \to \{1,2,\ldots, k\}$ be a coloring (not necessarily distinguishing) of $G$. For each automorphism $f$  of $G$, we have a permutation of the vertices which, at the same time, moves the colors.  So we can define a $k$-coloring $c'$ by 
\begin{equation}\label{equivalent colorings}
c'(v)=c(f^{-1}(v)	\;\;\; ( \mbox{i.e.} \;\;\; c'(f(v))=c(v)).
\end{equation}
These two colorings $c$ and $c'$ are called equivalent. By the same token, we call two colorings $c$ and $c'$ equivalent if there exists an automorphism $f$ of $G$ which establishes equation \ref{equivalent colorings}. Otherwise, they are non-equivalent. Plainly, if $c$ and $c'$ are equivalent, then $c$ is distinguishing if and only if $c'$ is distinguishing.

Non-equivalent distinguishing $k$-colorings is first defined in \cite{ahmadi2020number}.  They denote by  $\Phi_k(G)$  the number of non-equivalent $k$-colorings of $G$;  and by $\phi_k(G)$ the number of non-equivalent $k$-colorings of $G$ in which all $k$ colors are used. So, we have:
\begin{equation}\label{phi-phi}
	\Phi_k(G) = \sum_{i=D(G)}^k{k \choose i}\phi_k(G). 
\end{equation}



\begin{definition}
	Let $c$ be a coloring of a graph $G$. We say that $c$ supports an automorphism $f \in \aut(G)$ if $c(f(v)) = c(v)$ for all $v \in V(G)$. In other words, $f$ preserves the colors of vertices.
\end{definition}

For any coloring $c$ of a graph $G$ it can be easily observed that the stabilizer of $c$,  
$$S_c=\{f\in \aut(G):\;\; c \;\; {\mbox{supports}}\;\;f\}$$
 is a subgroup of $\aut(G)$ and the number of colorings equivalent to $c$ is $[\aut(G):\;S_c]$ (the index of $S_c$  in $\aut(G)$).
Therefore, $c$ is a distinguishing coloring of $G$ if and only if $S_c$ is the identity subgroup of $\aut(G)$.

For a graph $G$ we denote the number of distinguishing colorings of $G$ using at most $k$ colors by $\mathfrak{D}_k(G)$.

\begin{theorem}
	For any graph $G$ of order $n$, we have,
	\begin{equation}\label{d-phi}
	\mathfrak{D}_k(G)=\sum_{i=D(G)}^{n} {k \choose i} \phi_i(G)|\aut(G)|=\Phi_k(G)|\aut(G)|,	
	\end{equation}
	 
	which is a monic polynomial in $k$ of degree $n$.
\end{theorem}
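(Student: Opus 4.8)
The plan is to count $\mathfrak{D}_k(G)$ directly, organizing the distinguishing colorings with palette in $\{1,\dots,k\}$ by the set of colors they actually use. Fix $i$ with $D(G)\le i\le n$ and a subset $S\subseteq\{1,\dots,k\}$ of size $i$. Among the distinguishing colorings of $G$ whose image is exactly $S$, equivalent colorings fall into classes, and by definition there are $\phi_i(G)$ such classes; moreover, since any distinguishing coloring $c$ has trivial stabilizer $S_c$, the class of $c$ has size $[\aut(G):S_c]=|\aut(G)|$. So each $S$ contributes $\phi_i(G)\,|\aut(G)|$ colorings, and there are $\binom{k}{i}$ sets $S$. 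Summing over $i$ — using that a distinguishing coloring uses at least $D(G)$ and at most $n=|V(G)|$ colors — yields
\begin{equation*}
\mathfrak{D}_k(G)=\sum_{i=D(G)}^{n}\binom{k}{i}\,\phi_i(G)\,|\aut(G)| = |\aut(G)|\sum_{i=D(G)}^{n}\binom{k}{i}\,\phi_i(G),
\end{equation*}
and the inner sum is $\Phi_k(G)$ by \eqref{phi-phi} (the terms with $i>n$ vanish since $\phi_i(G)=0$ there), giving the stated identity $\mathfrak{D}_k(G)=\Phi_k(G)\,|\aut(G)|$.

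Next I would read off polynomiality from the formula just derived. Each $\binom{k}{i}=\frac{1}{i!}k(k-1)\cdots(k-i+1)$ is a polynomial in $k$ of degree exactly $i$ with leading coefficient $1/i!$, so $\mathfrak{D}_k(G)$ is a polynomial in $k$; its degree is controlled by the $i=n$ term and equals $n$ once we know $\phi_n(G)\neq 0$, with leading coefficient $\phi_n(G)\,|\aut(G)|/n!$.

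Thus the proof reduces to the identity $\phi_n(G)\,|\aut(G)|=n!$, which I would establish by examining bijective colorings: a coloring using all $n$ colors on the $n$ vertices is a bijection $V(G)\to\{1,\dots,n\}$, and any color-preserving automorphism of it fixes every vertex, hence is the identity; so all $n!$ bijective colorings are distinguishing, and by the class-size computation above they split into $\phi_n(G)$ classes each of size $|\aut(G)|$. Hence $n!=\phi_n(G)\,|\aut(G)|$, the leading coefficient is $1$, and $\mathfrak{D}_k(G)$ is monic of degree $n$.

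I expect this last step — nailing down the leading coefficient — to be the only point requiring genuine thought, the rest being bookkeeping on top of the orbit–stabilizer remark already recorded before the theorem. The subtlety is that bijective colorings are \emph{automatically} distinguishing, so at the top degree they are exactly what $\phi_n(G)$ counts; this is what forces the leading coefficient to be $1$ rather than some a priori unknown quantity, and it is also why the summation may be truncated at $n$ rather than carried up to $k$.
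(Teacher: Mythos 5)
Your proof is correct and follows essentially the same route as the paper: the identity comes from the orbit--stabilizer observation that every distinguishing coloring has trivial stabilizer, so its equivalence class has size $|\aut(G)|$, and the colorings are grouped by the set of colors actually used. Your explicit verification that $\phi_n(G)\,|\aut(G)|=n!$ (via the $n!$ bijective colorings, all automatically distinguishing) is precisely the detail the paper leaves implicit when it identifies the top-degree term with $k(k-1)\cdots(k-n+1)$, so nothing in your argument diverges in substance.
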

\begin{proof}
	The equality \ref{d-phi} is clear by definitions. Note that in the summation above, the largest degree term versus $k$ is $k^n$ occurring in $k(k-1)\ldots(k-n+1)$, for $i=n$; and the previous terms have degrees less than $n$ in $k$. Hence  $\mathfrak{D}_k(G)$ is a monic polynomial of degree $n$. 
\end{proof}

\begin{definition}
	For a graph $G$ we call the polynomial $\mathfrak{D}_k(G)$ the distinguishing polynomial of $G$.
\end{definition}

Since the automorphisms of $G$ are the same as those of $G^c$,  the distinguishing polynomials of $G$ and $G^c$ are the same. Also, once we have one of the three functions 
$\phi_k(G)$, $\Phi_k(G)$ or $\mathfrak{D}_k(G)$, then the other two can be derived via equalities \ref{phi-phi} and \ref{d-phi}. 

\begin{example}\label{example6ta}
The following facts can easily be verified.
\begin{enumerate}
\item  $\mathfrak{D}_k(K_n) =k_{(n)}= k (k-1) \cdots (k-n+1)$. 
\item If $G$ is asymmetric of order $n$, then $\mathfrak{D}_k(G)=k^n$. Conversely, if  $\mathfrak{D}_k(G)=k^n$, then $G$ is  asymmetric  of order $n$.
\item If $S_n$ denotes the star with $n+1$ vertices, then $\mathfrak{D}_k(S_n) = k k_{(n)} = k^2 (k-1) \cdots (k-n+2).$ 
\item $\mathfrak{D}_k(2K_2) = \mathfrak{D}_k(C_4)  = k (k-1) (k-2) (k+1).$
\item If $G$ is a graph with no isolated vertex, then $\mathfrak{D}_k(G\cup K_1)=k\mathfrak{D}_k(G).$
\item  For the path $P_n$  on $n$ vertices, $\mathfrak{D}_k(P_n)=k^n-k^{\lceil \frac{n}{2} \rceil}$. Because every non-distinguishing coloring of $P_n$ is a coloring symmetric to the mid point of the path. 
\end{enumerate}

\end{example}

Note that item 6 in the example above  together with equation \ref{d-phi} and $|\aut(P_n)|=2$ easily gives $\Phi_k(P_n)=\frac{1}{2}(k^n-k^{\lceil \frac{n}{2}\rceil})$. This is done recursively in \cite{ahmadi2020number}, without giving an explicit formula.    

Another challenging case mentioned in \cite{ahmadi2020number} is computing  $\Phi_k$ for cycles. The following section is devoted to cycles. We consider disconnected graphs and complete multipartite graphs in section  \ref{disc}. At the end, in section \ref{conclusion}, we show that the multiplicity of zero in $\mathfrak{D}_k(G)$  is at least the number of orbits of $G$.

\section{Cycles}
As usual $C_n$ denotes the cycle of length $n \geq 3$. Recall that the automorphism group of $C_n$ is (isomorphic to) the dihedral group $D_{2n}$; consisting of all rotations and reflections of a regular $n$-gon . Therefore, a distinguishing coloring of $C_n$ can be regarded as one that fixes all rotations and reflections of the regular $n$-gon of unit edge. 
Let us make no difference between the regular $n$-gon of unit edge and $C_n$. 
We need also the degenerate cycles $C_1=K_1$ and $C_2=K_2$, whose automorphism groups are the identity and $\mathbb{Z}_2$, respectively.

We label the vertices of $C_n$ by $1,\ldots, n$ and  calculations are taken modulo $n$. For $1\leq i \leq n$, denote by $\rho_i$ the reflection relative to the diagonal passing through the vertex $i$. When $n$ is even, we have also reflections relative to the diagonals bisecting two opposite edges $\{i,i+1\}$  and $\{i+\frac{n}{2},i+1+\frac{n}{2}\}$. Such a reflection is denoted by $\rho_{i,i+1} =\rho_{i+\frac{n}{2},i+1+\frac{n}{2}}.$  Note that when $n$ is odd, $\rho_i$ is also the reflection relative to the diagonal bisecting the edge $\{i+\frac{n-1}{2}, i+\frac{n+1}{2}\}$. Despite this fact,
we restrict the reflections $\rho_{i,i+1}$ to the case $n$ even. See Figure \ref{fig:reflections}. 
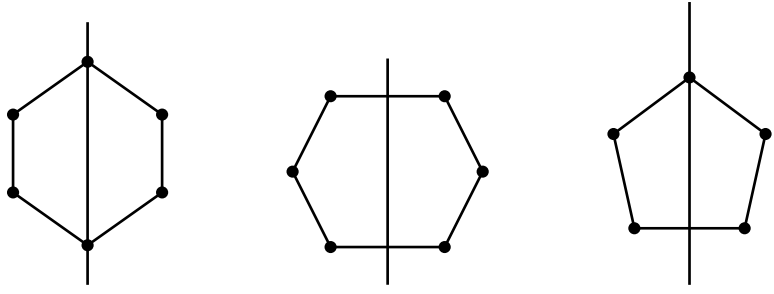
\begin{figure} [h]
	\centering
	\begin{center}
		\begin{tabular}{ c c c c c }
			\begin{tikzpicture}  [scale=0.35]
				\tikzstyle{every path}=[line width=1pt]
				
				\newdimen\ms
				\ms=0.1cm
				\tikzstyle{s1}=[color=black,fill,rectangle,inner sep=3]
				\tikzstyle{c1}=[color=black,fill,circle,inner sep={\ms/8},minimum size=1.6*\ms]
				\tikzstyle{c2}=[color=red,fill,circle,inner sep={\ms/8},minimum size=1.6*\ms]
				
				
				\coordinate (a1) at  (0,0);
				\coordinate(a2) at (-2.8,-2);
				\coordinate(a3) at (2.8,-2);
				\coordinate(a4) at (0,1.5);
				\coordinate(a5) at (0,-8.45);
				\coordinate(a6) at (-2.8,-4.95);
				\coordinate(a7) at (2.8,-4.95);
				\coordinate(a8) at (0,-6.95);				
				\draw [color=black] (a1) -- (a2);
				\draw [color=black] (a3) -- (a1);
				\draw [color=black] (a5) -- (a4);
				\draw [color=black] (a7) -- (a3);
				\draw [color=black] (a6) -- (a2);
				\draw [color=black] (a7) -- (a8);
				\draw [color=black] (a6) -- (a8);
				
				\draw (a1) coordinate[c1];
				\draw (a2) coordinate[c1];
				\draw (a3) coordinate[c1];
				\draw (a7) coordinate[c1];
				\draw (a6) coordinate[c1];
				\draw (a8) coordinate[c1];
			\end{tikzpicture}
			&
			\hspace{1cm}
			\begin{tikzpicture}  [scale=0.5]
				\tikzstyle{every path}=[line width=1pt]
				
				\newdimen\ms
				\ms=0.1cm
				\tikzstyle{s1}=[color=black,fill,rectangle,inner sep=3]
				\tikzstyle{c1}=[color=black,fill,circle,inner sep={\ms/8},minimum size=1.6*\ms]
				\tikzstyle{c2}=[color=red,fill,circle,inner sep={\ms/8},minimum size=1.6*\ms]
				
				
				\coordinate (a1) at  (0,1);
				\coordinate(a2) at (3,1);
				\coordinate(a3) at (0,-3);
				\coordinate(a4) at (3,-3);
				\coordinate(a5) at (4,-1);
				\coordinate(a6) at (-1,-1);
				\coordinate(a7) at (1.5,2);
				\coordinate(a8) at (1.5,-4);				
				\draw [color=black] (a1) -- (a2);
				\draw [color=black] (a6) -- (a1);
				\draw [color=black] (a5) -- (a2);
				\draw [color=black] (a5) -- (a4);
				\draw [color=black] (a4) -- (a3);
				\draw [color=black] (a3) -- (a6);
				\draw [color=black] (a7) -- (a8);
				
				\draw (a1) coordinate[c1];
				\draw (a2) coordinate[c1];
				\draw (a3) coordinate[c1];
				\draw (a4) coordinate[c1];
				\draw (a5) coordinate[c1];
				\draw (a6) coordinate[c1];
			\end{tikzpicture}
			&
			\hspace{1cm}
			\begin{tikzpicture}  [scale=0.5]
				\tikzstyle{every path}=[line width=1pt]
				
				\newdimen\ms
				\ms=0.1cm
				\tikzstyle{s1}=[color=black,fill,rectangle,inner sep=3]
				\tikzstyle{c1}=[color=black,fill,circle,inner sep={\ms/8},minimum size=1.6*\ms]
				\tikzstyle{c2}=[color=red,fill,circle,inner sep={\ms/8},minimum size=1.6*\ms]
				
				
				\coordinate (a1) at  (0,-0.5);
				\coordinate(a2) at (-2,-2);
				\coordinate(a3) at (2,-2);
				\coordinate(a4) at (0,1.5);
				\coordinate(a5) at (0,-6);
				\coordinate(a6) at (-1.45,-4.5);
				\coordinate(a7) at (1.45,-4.5);
				
				\draw [color=black] (a1) -- (a2);
				\draw [color=black] (a3) -- (a1);
				\draw [color=black] (a5) -- (a4);
				\draw [color=black] (a7) -- (a3);
				\draw [color=black] (a6) -- (a2);
				\draw [color=black] (a7) -- (a6);

				\draw (a1) coordinate[c1];
				\draw (a2) coordinate[c1];
				\draw (a3) coordinate[c1];
				\draw (a7) coordinate[c1];
				\draw (a6) coordinate[c1];
			\end{tikzpicture}
			\\ \vspace{1mm} \\
		\end{tabular}

	\end{center}
	\caption{Reflections are relative to diagonals}
	
	\label{fig:reflections}
\end{figure}
In order to compute $\mathfrak{D}_k(C_n)$, our first tool is the following lemma which is an straightforward consequence of the algebraic definition of $D_{2n}$.

\begin{lemma}\label{two different reflections}\hfill
	\begin{enumerate}
		\item[(a)] If a subgroup of $D_{2n}$ contains two reflections, then it contains a rotation.
		
		\item[(b)] If a subgroup of $D_{2n}$ contains a rotation and a reflection, then it contains more than one reflections.
	\end{enumerate}
\end{lemma}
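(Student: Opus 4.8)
The plan is to reduce everything to the multiplication table of the dihedral group, using the standard presentation $D_{2n}=\langle r,s\mid r^n=s^2=1,\ srs^{-1}=r^{-1}\rangle$. Here the $n$ rotations are the powers $r^i$ ($0\le i<n$) and the $n$ reflections are the elements $sr^i$ ($0\le i<n$); geometrically this identifies each $\rho_i$, and (for $n$ even) each $\rho_{i,i+1}$, with a suitable $sr^i$, so that no case distinction on the parity of $n$ is needed. The two facts I would record first, both immediate consequences of $r^a s = s r^{-a}$, are: the product of two reflections is a rotation, $(sr^a)(sr^b)=s(r^a s)r^b=s(sr^{-a})r^b=r^{b-a}$, and the product of a rotation and a reflection is again a reflection, $r^a(sr^b)=(r^a s)r^b=(sr^{-a})r^b=sr^{b-a}$.

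For part (a), let $H\le D_{2n}$ contain two distinct reflections $x=sr^a$ and $y=sr^b$ with $a\not\equiv b\pmod n$. By closure of $H$ we get $xy=r^{b-a}\in H$, and since $b-a\not\equiv 0\pmod n$ this is a nontrivial rotation, as required. For part (b), suppose $H$ contains a nontrivial rotation $\rho=r^a$ with $a\not\equiv 0\pmod n$ and a reflection $\sigma=sr^b$. Then $\rho\sigma=sr^{b-a}\in H$ is again a reflection, and $b-a\not\equiv b\pmod n$ forces $\rho\sigma\ne\sigma$; hence $H$ contains at least two distinct reflections.

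There is essentially no obstacle here: the lemma is a direct unwinding of the relation $srs^{-1}=r^{-1}$ together with closure under the group operation. The only points worth flagging explicitly, so the lemma is invoked correctly later, are that in (b) ``rotation'' must be read as \emph{nontrivial} rotation (the identity together with a single reflection generates a two-element subgroup with just one reflection), and that the degenerate cycles $C_1$ and $C_2$ carry no reflections at all, so the statement is vacuous in those cases.
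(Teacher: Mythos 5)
Your proof is correct and is exactly the ``straightforward consequence of the algebraic definition of $D_{2n}$'' that the paper invokes without writing out: the products $(sr^a)(sr^b)=r^{b-a}$ and $r^a(sr^b)=sr^{b-a}$ together with closure give both parts. Your remark that ``rotation'' must mean a nontrivial rotation in (b) is a sensible clarification and matches how the lemma is used later for colorings of $C_n$.
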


Recall that a coloring $c$ of a graph $G$ supports an automorphism $f \in \aut(G)$ if $c(f(v))=c(v)$ for all $v\in V(G)$. In case of $C_n$, the following lemma gives  more insight on colorings supporting automorphisms.

\begin{lemma}\label{reflections and rotations of $C_n$}
	Let $c$ be a coloring of $C_n$. 
	\begin{enumerate}
		\item[(a)] If $c$ supports  two different reflections, then it supports a rotation.
		\item[(b)] If $c$ supports a reflection and a rotation, then it supports more than one reflections.
	
	\end{enumerate} 
\end{lemma}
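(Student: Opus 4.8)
The plan is to exploit the fact that a coloring $c$ of $C_n$ supports an automorphism $f$ if and only if $f$ lies in the stabilizer subgroup $S_c \leq \aut(C_n) = D_{2n}$. Once this is observed, both parts (a) and (b) become immediate applications of Lemma \ref{two different reflections}. Concretely, for part (a): if $c$ supports two different reflections $\rho$ and $\rho'$, then $\rho, \rho' \in S_c$, so $S_c$ is a subgroup of $D_{2n}$ containing two distinct reflections; by Lemma \ref{two different reflections}(a) it contains a (necessarily nonidentity, since a product of two distinct reflections in a dihedral group is a nontrivial rotation) rotation $r$, and then $r \in S_c$ means $c$ supports $r$. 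For part (b): if $c$ supports a reflection $\rho$ and a rotation $r$, then $\rho, r \in S_c$; if $r$ is the identity there is nothing to do for the "rotation" hypothesis to be meaningful, so assume $r$ nontrivial, whence by Lemma \ref{two different reflections}(b) the subgroup $S_c$ contains more than one reflection, and each such reflection is supported by $c$.

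The key steps, in order, are: (1) recall from the discussion preceding the lemma that $S_c = \{f \in \aut(C_n) : c \text{ supports } f\}$ is a subgroup of $\aut(C_n)$; (2) translate "$c$ supports $f$" into "$f \in S_c$"; (3) invoke the purely group-theoretic Lemma \ref{two different reflections} with the subgroup $H = S_c$; (4) translate the conclusion ("$H$ contains a rotation" / "$H$ contains more than one reflection") back into a statement about which automorphisms $c$ supports. Since Lemma \ref{two different reflections} is already available, steps (3) is free, and the entire argument is essentially a dictionary translation between the language of supported automorphisms and the language of subgroups of $D_{2n}$.

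I do not anticipate a genuine obstacle here; the only point requiring a word of care is the implicit nondegeneracy in the statements — namely that in part (a) the rotation produced is the identity only if the two reflections coincide (ruled out by hypothesis), and in part (b) one should read "a rotation" as "a nonidentity rotation" for the hypothesis to have content (otherwise the claim is vacuous or the identity rotation is always supported). One should also note the conventions for the degenerate cases $C_1$ and $C_2$: for $C_2$, $D_4 \cong \mathbb{Z}_2$ has no reflections in the geometric sense and the statement is vacuously true, and for $C_1$ there is nothing to prove; so the lemma is stated for $n \geq 3$ where $D_{2n}$ genuinely contains reflections. With these remarks in place the proof is a two- or three-line deduction from Lemma \ref{two different reflections}.
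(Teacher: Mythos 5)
Your proof is correct and is essentially identical to the paper's argument: the paper likewise observes that the stabilizer $S_c$ is a subgroup of $\aut(C_n)$ and then applies Lemma \ref{two different reflections}. Your extra remarks on nondegeneracy and the cases $C_1$, $C_2$ are fine but not needed.
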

\begin{proof}
	Note that the stabilizer $S_c$ of $c$ is a subgroup of $\aut(C_n)$ and use Lemma \ref{two different reflections}.
	\end{proof}

Let $d$ be a divisor of $n$ and consider a colored segment $[d]=\{1,2,\ldots d\}$. If we put this colored segment consecutively around $C_n$  ($\frac{n}{d}$ times,  clockwise), starting at an arbitrary vertex, then we have a coloring $c$ of $C_n$. In this case, we say that the coloring $c$ is generated by the colored segment $[d]$.
Note that when $d\neq n$, $c$ supports the rotation by the angle $2\pi \frac{d}{n}.$ On the other hand, if $c$ is a coloring supporting a rotation of $C_n$, then there exists a divisor $d\neq n$  of $n$, such that $c$ is generated by the colored segment $[d]$.

According to Lemma \ref{reflections and rotations of $C_n$}, the set of all non-distinguishing $k$-colorings of $C_n$ is partitioned into two disjoint subsets. One is consisting of those colorings supporting exactly one reflection, and the other is consisting of colorings  which support some rotation. We denote the former   by $\mathcal{M}_{n,k}$ and the latter by  $\mathcal{N}_{n,k}$.
%
Therefore, the distinguishing polynomial of $C_n$ would be  
\begin{equation} \label{dk}
\mathfrak{D}_k(C_n) = k^n - |\mathcal{M}_{n,k}| - |\mathcal{N}_{n,k}|.
\end{equation}
In the sequel, we count $|\mathcal{M}_{n,k}|$ and 
$|\mathcal{N}_{n,k}|$.

Note that any non-constant coloring $c$ of $C_n$ can be generated by more than one colored segments, because of cyclicity. Assume that the coloring $c$ is generated by a segment $[d]$, $d\neq 1$. For $i \in \{1,2, \ldots, n\}$, if the vertex $i$ does not match the first position of a copy of $[d]$ around the cycle, there exists another coloring of the segment $[d]$ generating $c$, in which the vertex $i$ matches the first position in a copy of $[d]$. In fact, the new colored segment is obtained from $[d]$ by moving the colors cyclically in the segment. For $d=1$, a trivial statement holds. This observation implies the following lemma.
	
	 \begin{lemma} \label{d'|d}
	 	Let $d$ be a divisor of $n$. The number of $k$-colorings of $C_n$ generated by colored segments of length $d$ is $k^d$. In addition, if $d'|d$, a $k$-coloring generated by a colored segment of length $d'$ is also generated by some colored segment of length $d$. 
	 \end{lemma}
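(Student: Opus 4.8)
The plan is to prove Lemma~\ref{d'|d} in two parts, treating the two assertions separately.

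\medskip

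\textbf{First assertion (counting colorings generated by segments of length $d$).}
Fix a vertex, say vertex $1$, as a reference point and only consider colored segments $[d]$ whose first position is placed at vertex $1$. Each such placement determines a unique $k$-coloring of $C_n$ (wrap the segment $\frac{n}{d}$ times clockwise), and every coloring generated by \emph{some} length-$d$ segment is generated by one placed at vertex $1$ — this is exactly the cyclic-shift observation recalled just before the lemma statement (for $d=1$ it is the trivial case: a constant coloring). So the map from colorings of the length-$d$ segment to $k$-colorings of $C_n$ generated by length-$d$ segments is surjective. It is also injective: if two length-$d$ segments both placed at vertex $1$ generate the same coloring of $C_n$, then reading off the colors of vertices $1, 2, \ldots, d$ recovers the segment, so the two segments agree. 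Since there are exactly $k^d$ colorings of a segment of $d$ positions with $k$ colors, the count is $k^d$. I would write this as a clean bijection argument; no obstacle here.

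\medskip

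\textbf{Second assertion (refinement $d' \mid d$).}
Suppose $d' \mid d \mid n$ and $c$ is a $k$-coloring of $C_n$ generated by a colored segment $[d']$ of length $d'$, placed (without loss of generality, by the first part) with its first position at vertex $1$. Then $c$ is periodic with period $d'$ around the cycle: $c(i) = c(i + d')$ for all $i$ (indices mod $n$). Because $d' \mid d$, periodicity with period $d'$ forces periodicity with period $d$: $c(i) = c(i+d)$ for all $i$. Hence the length-$d$ segment $S = (c(1), c(2), \ldots, c(d))$, when placed at vertex $1$ and wrapped $\frac{n}{d}$ times, reproduces $c$. So $c$ is generated by a colored segment of length $d$, namely $S$. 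I would present this as a direct verification via the period-divisibility step $c(i)=c(i+d')=c(i+2d')=\cdots=c(i+d)$.

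\medskip

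The only subtlety worth stating carefully — and the closest thing to an obstacle — is the legitimacy of the normalization ``place the first position at vertex $1$'' in both parts: one must invoke the preceding paragraph's cyclic-shift remark to know that restricting attention to segments anchored at vertex $1$ loses no colorings. Once that is granted, both parts are short. I would keep the proof to a few lines, citing the displayed observation rather than reproving it.
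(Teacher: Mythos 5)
Your proof is correct and follows essentially the same route as the paper's: count the $k^d$ colorings of the segment, use the cyclic-shift observation preceding the lemma to justify anchoring at vertex $1$ without loss, and note that the second assertion follows from period-divisibility (which the paper simply calls clear). Your version just makes the injectivity of the segment-to-coloring map and the periodicity step explicit, which the paper leaves implicit.
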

	 \begin{proof}
	 	The number of $k$-colorings of the segment  $[d]$ is $k^d$, which we put it around $C_n$ consecutively from an arbitrary initial vertex. The initial vertex does not alter the counting, by the preceding observation. 
	 	The second assertion is clear. 
	 \end{proof}  
	
	\begin{lemma}\label{coloring-induced-on-cd}
		
		Suppose that $d$ is a divisor of $n$ and $[d]$ is a colored segment and $c$ is a coloring of $C_n$ generated by $[d]$. Then $c$ supports a reflection of $C_n$ if and only if any coloring of $C_d$ generated by $[d]$ supports a reflection of $C_d$. 
	
	\end{lemma}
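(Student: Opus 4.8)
The plan is to set up a concrete correspondence between the relevant reflections of $C_n$ and those of $C_d$, and then check that this correspondence is compatible with being ``supported'' by the colorings in question. First I would fix notation: write $n = md$, label the vertices of $C_n$ by $1,\dots,n$ and those of $C_d$ by $1,\dots,d$, and assume (using Lemma~\ref{d'|d}, i.e.\ the cyclic freedom in the choice of $[d]$) that $c$ is generated by $[d]$ starting at vertex $1$, so that $c(i)$ depends only on $i \bmod d$. The coloring of $C_d$ ``generated by $[d]$'' is just the coloring $\bar c$ with $\bar c(i) = $ the $i$-th entry of $[d]$; every coloring of $C_d$ generated by $[d]$ is a cyclic shift of $\bar c$, and since the statement is invariant under such shifts on both sides, it suffices to prove: $c$ supports a reflection of $C_n$ iff $\bar c$ supports a reflection of $C_d$.

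Next I would analyze what it means for $c$ to support a reflection. A reflection of $C_n$ is either some $\rho_a$ (through a vertex) or, if $n$ is even, some $\rho_{a,a+1}$ (through a midpoint of an edge); in vertex arithmetic mod $n$ these act by $x \mapsto 2a - x$ and $x \mapsto 2a+1-x$ respectively, so in all cases a reflection is the map $x \mapsto t - x \pmod n$ for some fixed $t \in \{0,1,\dots,n-1\}$. Since $c$ factors through reduction mod $d$, $c$ supports the reflection $x \mapsto t-x \pmod n$ iff $c(t-x) = c(x)$ for all $x$, which by periodicity is equivalent to requiring only that $\bar c(\bar t - \bar x) = \bar c(\bar x)$ for all $x$, where $\bar t = t \bmod d$; that is, iff $\bar c$ supports the reflection $y \mapsto \bar t - y \pmod d$ of $C_d$. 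So the key point to verify in both directions is that the reduction-mod-$d$ map carries reflections of $C_n$ onto reflections of $C_d$ and is ``onto'' at the level of the parameter $t$: every residue $\bar t \bmod d$ arises as $t \bmod d$ for some valid reflection parameter $t$ of $C_n$ (e.g.\ just take $t$ itself, noting $0 \le t < n$ is an allowed parameter since every $t$ gives a genuine reflection — through a vertex if $t$ has the right parity or through an edge-midpoint otherwise, and when $n$ is odd both parities give vertex-reflections). This surjectivity onto residues, combined with the periodicity of $c$, gives both implications at once.

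The one subtlety I expect to need care is the parity bookkeeping when $n$ and $d$ have different parities — e.g.\ $n$ even but $d$ odd, or vice versa — because then the ``type'' of a reflection (vertex-type vs.\ edge-midpoint-type) can change under reduction mod $d$, and one must make sure that a genuine reflection downstairs always lifts to a genuine reflection upstairs and conversely. I would handle this cleanly by abandoning the vertex/edge dichotomy entirely and working only with the uniform description ``$x \mapsto t - x \pmod n$'': on $C_n$ every such map with $t \in \mathbb{Z}/n$ is a reflection (this is exactly the content of the $\rho_i$, $\rho_{i,i+1}$ description and the parenthetical remark about odd $n$ in the text), and likewise on $C_d$ every $y \mapsto s - y \pmod d$ is a reflection. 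With that normalization there is no parity case analysis at all: $c$ supports $x\mapsto t-x$ iff $\bar c$ supports $y \mapsto (t\bmod d) - y$, and as $t$ ranges over $\mathbb{Z}/n$ its reduction ranges over all of $\mathbb{Z}/d$, so ``$c$ supports some reflection'' $\iff$ ``$\bar c$ supports some reflection.'' I would then note that by Lemma~\ref{d'|d} the conclusion is independent of which coloring of $C_d$ generated by $[d]$ we chose, completing the proof.
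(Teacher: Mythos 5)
Your argument is correct, but it takes a genuinely different (and slicker) route than the paper. The paper keeps the dichotomy between vertex-reflections $\rho_i$ and edge-reflections $\rho_{i,i+1}$ and works through the resulting cases, including the parity of $d$ (with an explicit shift by $\frac{d+1}{2}$ when an edge-type reflection of $C_n$ induces a vertex-type reflection of $C_d$), each time verifying a congruence of the form $c(j+1)=c(-j+1)$ or $c(j+1)=c(-j)$ modulo $n$ and observing it persists modulo $d$, and then doing the converse lifting case by case. You instead parametrize all reflections uniformly as $x\mapsto t-x \pmod n$, use that a coloring generated by $[d]$ anchored at vertex $1$ factors through reduction modulo $d$, and note that reduction of the parameter $t$ from $\mathbb{Z}/n$ onto $\mathbb{Z}/d$ is surjective; this kills the vertex/edge and parity case analysis and delivers both implications at once (your preliminary reduction to a single coloring $\bar c$ of $C_d$ is also legitimate, since supporting a reflection is invariant under rotating the coloring). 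What your route buys is brevity and no parity bookkeeping; what the paper's route buys is the finer information of exactly which reflection of $C_d$ is induced (vertex-type when $d$ is odd, edge-type when $d$ is even), which is precisely what gets reused in the proof of Corollary \ref{counting}(b) to obtain the exponent $\lfloor\frac{d+1}{2}\rfloor$ instead of $\lceil\frac{d+1}{2}\rceil$. In your framework the same information is read off from the parity of $t\bmod d$, but you would need to make that explicit when that corollary is proved. (Both your proof and the paper's leave the degenerate divisors $d\in\{1,2\}$ to the conventions fixed at the start of the section, so that is not a gap you introduced.)
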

	\begin{proof}	Throughout this proof,  without loss of generality, we assume that the vertex 1 matches the first position in a copy of $[d]$ around the corresponding cycle ($C_n$ or $C_d$). 
		
		First suppose that $c$ supports a reflection $\rho_i\in \aut(C_n),\;\; 1\leq i\leq n$. We can assume that $i=1$.  Hence, 
			\begin{equation} \label{cjvertex}
			c(j+1) = c(-j+1), \;\;\;  1 \leq j \leq n. 
		\end{equation}
		Here the calculations are taken modulo $n$. Since $d$ is a divisor of $n$, equation  
	\ref{cjvertex} holds modulo $d$, too. Consequently, coloring the vertices of $C_d$ according to the coloring of $[d]$ supports the reflection $\rho_1 \in \aut(C_d)$.  
		
		Second, suppose that $c$ supports the reflection $\rho_{i,i+1}$ which occurs for even $n$. We assume that $i=n$.  Consequently,
		 \begin{equation} \label{cj}
		c(j+1) = c(-j),\;\;\; 1\leq j\leq n.
		\end{equation}
		 Here the calculations are taken modulo $n$.  We color the vertices of $C_d$ according to the coloring of $[d]$. When $d$ is even, it supports $\rho_{d,1}\in \aut{(C_d})$, again by the equation \ref{cj}; since $d$ is a divisor of $n$ and the equation \ref{cj} also holds modulo $d$. If $d$ is odd, changing $j$ by $\frac{d+1}{2}+j$ in equation \ref{cj} leads to:
		 $$
		 c(\frac{d+1}{2}+j+1)=c(-\frac{d+1}{2}-j)=c(d-\frac{d+1}{2}-j)=c(\frac{d+1}{2}-j+1).
		 $$
		 This equation is nothing but the equation \ref{cjvertex} shifted by $\frac{d+1}{2}$, true also in modulo $d$. It shows that the coloring  supports the reflection $\rho_{\frac{d+1}{2}}=\rho_{\lceil \frac{d}{2}\rceil}\in \aut{(C_d)}.$

		Conversely, suppose that $c$ is a coloring of $C_d$ generated by a colored segment $[d]$, which supports a reflection $\rho \in \aut(C_d)$. If  $\rho = \rho_i$, $1\leq i\leq d$, again we can assume that $i=1$. Then, 
		$$c(j + 1) = c(-j + 1), \;\;\; 1 \leq j \leq d.$$ Here computations are taken modulo $d$. Consider the coloring $c'$ on $C_n$ generated by the colored segment $[d]$. So, for each $1 \leq p  \leq n$, if $p=qd+j$, for  integers $q$ and $1 \leq j \leq d$, then $c'(p)=c(j)$.  That is, if two vertices of $C_n$ are congruent modulo $d$,  they are assigned the same color as in $[d]$.  Hence,  
		$$c'(p+1)=c(j+1)=c(-j+1)=c'(-p+1).$$   
It follows that $c'$ supports $\rho_1\in \aut(C_n)$.
	(Whenever $n$ is even and $d$ is odd, similar corresponding calculations show that $c'$ also supports the reflection $\rho_{\frac{d+1}{2},\frac{d+3}{2}} \in \aut(C_n)$)
	
		Finally, let $\rho = \rho_{i,i+1}\in \aut(C_d)$,  which occurs for $d$ (and $n$) even. We assume that $i=d$. The proof is similar and runs as in the previous case. However, for the sake of completeness and to avoid confusions,  the proof is brought. For each $1\leq j \leq d$, we have $c(j+1)=c(-j)$, where the calculations are taken modulo $d$.   
		Consider the coloring $c'$ on $C_n$ generated by the colored segment $[d]$. So, for each $1 \leq p  \leq n$, if $p=qd+j$, for integers $q$ and $1 \leq j \leq d$, then $c'(p)=c(j)=c(-j+1)=c'(-p+1)$. Hence,  
		$$c'(p+1)=c(j+1)=c(-j)=c'(-p), \;\;\; 1\leq p\leq n.$$
		This shows that $c'$ supports $\rho_{n,1}\in \aut(C_n)$. 
\end{proof}

\begin{corollary}\label{counting} Let $d|n$ and $1\leq i\leq n$. Then,	\begin{itemize}
	 
	\item [(a) ]	The number of $k$-colorings of $C_n$ which are generated by a colored segment of length $d$ and support the  reflection $\rho_i$ is
  $ k^{\lceil \frac{d + 1}{2}\rceil}$. 
	\item [(b)] If $n$ is even, the number of $k$-colorings of $C_n$ which are generated by a colored segment of length $d$ and support the  reflection $\rho_{i,i+1}$ is 
	  $ k^{\lfloor \frac{d + 1}{2}\rfloor}$.  
\end{itemize}
	\begin{proof}
		We employ Lemma \ref{coloring-induced-on-cd} and  its proof.\\
	(a)	 Without loss of generality, let $i=1$. Such a coloring on $C_n$ induces a coloring on $C_d$ supporting the reflection $\rho_1$.  If $d$ is odd, this counts as $ k^{\frac{d + 1}{2}}$ colorings on $C_d$; since the vertex 1 has an arbitrary color and half of the remaining vertices determine the coloring. If $d$ is even, it counts as  $ k^{\frac{d }{2}+1}$ colorings on $C_d$; since in addition to vertex 1, its opposite vertex, $\frac{d}{2}+1$, has an arbitrary color, too; and half of the remaining vertices determine the coloring.  Both numbers  are equal to
	$ k^{\lceil \frac{d + 1}{2}\rceil}.$
	
	(b) Without loss of generality let $i=n$. 
	 First suppose that $d$ is odd. Since the coloring of $[d]$ around $C_n$ supports $\rho_{n,1}$, it supports the reflection $\rho_{\lceil \frac{d}{2}\rceil}$  on $C_d$. The number of such colorings is $ k^{\frac{d + 1}{2}},$  as computed in (a).
	
	If $d$ is even, in order to support $\rho_{n,1}$ on $C_n$, the segment $[d]$  is colored symmetrically with respect to its mid point. The number of such colorings of $[d]$ is 
	$ k^{\frac{d}{2}}$.
	Here both numbers are equal to $ k^{\lfloor \frac{d + 1}{2}\rfloor}$.
	\end{proof}
	
\end{corollary}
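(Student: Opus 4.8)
The plan is to push the count down from $C_n$ to the smaller cycle $C_d$, where it becomes a bare orbit count. By Lemma \ref{d'|d}, a $k$-coloring of $C_n$ generated by a colored segment of length $d$ is exactly the datum of an arbitrary $k$-coloring of $C_d$: pin down vertex $1$ as the start of a copy of $[d]$, as in the proof of Lemma \ref{coloring-induced-on-cd}, so that the coloring of $C_n$ is the $d$-periodic extension of a coloring of $C_d$; this is a bijection since both sides have cardinality $k^d$. Under this identification, Lemma \ref{coloring-induced-on-cd} --- in the refined form that its proof actually establishes --- says that such a $c$ supports the prescribed reflection of $C_n$ precisely when the associated coloring of $C_d$ supports the explicitly named corresponding reflection of $C_d$. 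Since a coloring is fixed by a reflection iff it is constant on each orbit of that reflection, it then suffices to compute the number of orbits of a single reflection of $C_d$ and raise $k$ to that power. Before doing this I would dispose of the index $i$: the rotations of $C_n$ act transitively on vertices and on edges, conjugate $\rho_1$ to $\rho_i$ (resp. $\rho_{n,1}$ to $\rho_{i,i+1}$), and, by precomposition, give a bijection between the length-$d$-generated colorings supporting $\rho_i$ and those supporting $\rho_1$; hence the count is independent of $i$ and we may take $i=1$ in (a) and $i=n$ in (b).

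For (a): the vertex-reflection $\rho_1\in\aut(C_d)$ fixes vertex $1$, fixes in addition the antipodal vertex $\frac{d}{2}+1$ when $d$ is even, and pairs off the remaining vertices. Counting orbits: for $d$ odd there are $1$ fixed vertex and $\frac{d-1}{2}$ transposition pairs, giving $k^{(d+1)/2}$ colorings; for $d$ even there are $2$ fixed vertices and $\frac{d-2}{2}$ pairs, giving $k^{d/2+1}$. Both exponents equal $\lceil\frac{d+1}{2}\rceil$.

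For (b) I would split on the parity of $d$, each time reading off from the proof of Lemma \ref{coloring-induced-on-cd} which reflection of $C_d$ is forced. If $d$ is even, $c$ supports $\rho_{n,1}$ on $C_n$ iff the associated coloring of $C_d$ supports the edge-bisecting reflection $\rho_{d,1}\in\aut(C_d)$, which has no fixed vertex and splits all $d$ vertices into $\frac{d}{2}$ pairs, so there are $k^{d/2}$ such colorings. If $d$ is odd, the same proof instead forces the associated coloring of $C_d$ to support the vertex-reflection $\rho_{\lceil d/2\rceil}\in\aut(C_d)$, of which there are $k^{(d+1)/2}$ by the count in (a). Both exponents equal $\lfloor\frac{d+1}{2}\rfloor$.

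Essentially all the real work is in Lemma \ref{coloring-induced-on-cd}; the rest is bookkeeping, and the main (mild) obstacle is keeping the four parity sub-cases aligned with the correct floor or ceiling. The one genuinely non-mechanical point is the odd-$d$ half of (b): an edge-bisecting reflection of $C_n$ pulls back to a \emph{vertex}-reflection of the odd cycle $C_d$, so the exponent $\frac{d}{2}$ one might naively expect is replaced by $\frac{d+1}{2}$ --- but this is exactly the phenomenon already isolated inside the proof of the previous lemma, so it needs only to be quoted correctly.
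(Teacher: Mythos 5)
Your proposal is correct and follows essentially the same route as the paper: reduce, via Lemma \ref{coloring-induced-on-cd} (in the sharpened, reflection-specific form its proof actually gives), to counting colorings of $C_d$ fixed by one explicit reflection, and then count free color choices per orbit, with the same four parity cases and the same pivot from an edge-reflection of $C_n$ to a vertex-reflection of $C_d$ when $d$ is odd. Your treatment is slightly more explicit than the paper's (the periodic-extension bijection, the conjugation argument behind the WLOG on $i$, and the needed converse direction for the odd-$d$ case of (b)), but these are refinements of the same argument rather than a different approach.
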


Let $n = p_1^{n_1}p_2^{n_2}\cdots p_t^{n_t}$ be the canonical decomposition of $n$ to  prime factors. For each subset $\mathcal{A}$ of $W=\{1, 2, \ldots, t\}$, define the integer 
$$n_{\mathcal{A}} = (\prod_{i \in W \setminus \mathcal{A}}p_i^{n_i}) (\prod_{j \in \mathcal{A}}p_j^{n_j-1}),$$ which is a divisor of $n$.  When $\mathcal{A}=\emptyset$,  $n_{\mathcal{A}}=n$; and  $n_{\mathcal{A}}$ is a maximal divisor of $n$ if and only if $|\mathcal{A}|=1.$  Also, for any 
subset $\mathcal{A}$ of $W$, 
we have $$n_{\mathcal{A}} = \gcd(n_{\{x\}}~:~ x \in \mathcal{A}).$$




\begin{lemma}\label{the-number-of-support-rotation}
	Let $n = p_1^{n_1}p_2^{n_2}\cdots p_t^{n_t}$ be the canonical decomposition of $n$ to prime factors. Then, 
	\[ |\mathcal{N}_{n,k}| =k^n - \sum_{\mathcal{A} \subseteq \{1,2, \ldots, t\}}(-1)^{|\mathcal{A}|} k^{n_{\mathcal{A}}}.\]
\end{lemma}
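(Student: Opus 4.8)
The plan is to realize $\mathcal{N}_{n,k}$ as a union of sets of colorings with a prescribed period and then run inclusion--exclusion. First I would recall from the discussion preceding Lemma~\ref{d'|d} that a coloring $c$ of $C_n$ lies in $\mathcal{N}_{n,k}$, i.e.\ supports some rotation, if and only if it is generated by a colored segment $[d]$ for some divisor $d\neq n$ of $n$. Since every proper divisor of $n$ divides one of the maximal proper divisors $n_{\{1\}},\ldots,n_{\{t\}}$ (where $n_{\{i\}}=n/p_i$), the second assertion of Lemma~\ref{d'|d} shows that $c\in\mathcal{N}_{n,k}$ if and only if $c$ is generated by a colored segment of length $n_{\{i\}}$ for at least one $i\in W=\{1,\ldots,t\}$. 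Writing $A_i$ for the set of $k$-colorings of $C_n$ generated by a colored segment of length $n_{\{i\}}$, we thus get $\mathcal{N}_{n,k}=A_1\cup\cdots\cup A_t$, with $|A_i|=k^{n_{\{i\}}}$ by Lemma~\ref{d'|d}.

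The key step is to compute $\bigl|\bigcap_{i\in\mathcal{A}}A_i\bigr|$ for each nonempty $\mathcal{A}\subseteq W$. A coloring $c$ lies in this intersection precisely when $c(v)=c(v+n_{\{i\}})$ for every vertex $v$ and every $i\in\mathcal{A}$; equivalently, $c$ is invariant under the subgroup of $\mathbb{Z}/n\mathbb{Z}$ generated by $\{n_{\{i\}}:i\in\mathcal{A}\}$. Because each $n_{\{i\}}$ divides $n$, that subgroup equals $\gcd(n_{\{i\}}:i\in\mathcal{A})\,\mathbb{Z}/n\mathbb{Z}=n_{\mathcal{A}}\,\mathbb{Z}/n\mathbb{Z}$, using the identity $n_{\mathcal{A}}=\gcd(n_{\{x\}}:x\in\mathcal{A})$ noted above. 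Hence $c$ is constant on the cosets of $n_{\mathcal{A}}\mathbb{Z}/n\mathbb{Z}$, i.e.\ $c$ is exactly a coloring generated by a colored segment of length $n_{\mathcal{A}}$; conversely, every such coloring lies in $A_i$ for each $i\in\mathcal{A}$, again by Lemma~\ref{d'|d}. Therefore $\bigl|\bigcap_{i\in\mathcal{A}}A_i\bigr|=k^{n_{\mathcal{A}}}$.

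Finally I would substitute these cardinalities into inclusion--exclusion:
\[
|\mathcal{N}_{n,k}|=\sum_{\emptyset\neq\mathcal{A}\subseteq W}(-1)^{|\mathcal{A}|+1}\Bigl|\bigcap_{i\in\mathcal{A}}A_i\Bigr|=-\sum_{\emptyset\neq\mathcal{A}\subseteq W}(-1)^{|\mathcal{A}|}k^{n_{\mathcal{A}}},
\]
and then reinsert the missing term $\mathcal{A}=\emptyset$, which contributes $(-1)^{0}k^{n_{\emptyset}}=k^{n}$, to obtain
\[
|\mathcal{N}_{n,k}|=k^{n}-\sum_{\mathcal{A}\subseteq W}(-1)^{|\mathcal{A}|}k^{n_{\mathcal{A}}},
\]
as claimed.

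The only genuinely non-formal point is the middle step---that finitely many ``periods'' all dividing $n$ force periodicity by their greatest common divisor---but since every relevant period divides $n$, this reduces to the elementary fact that the subgroup of $\mathbb{Z}/n\mathbb{Z}$ generated by some divisors of $n$ is generated by their gcd, so I do not expect any real difficulty there. The part to state carefully is instead the translation between ``period $n_{\mathcal{A}}$'' and ``generated by a colored segment of length $n_{\mathcal{A}}$'' via Lemma~\ref{d'|d}, together with the observation that this accounts for \emph{all} of $\mathcal{N}_{n,k}$.
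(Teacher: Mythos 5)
Your proof is correct and follows essentially the same route as the paper: realizing $\mathcal{N}_{n,k}$ as the union over $i$ of the colorings generated by segments of length $n_{\{i\}}$, identifying each intersection with the colorings generated by segments of length $n_{\mathcal{A}}$ (of cardinality $k^{n_{\mathcal{A}}}$), and applying inclusion--exclusion before absorbing the $\mathcal{A}=\emptyset$ term. Your extra justification of the intersection step via the gcd of periods is a welcome detail that the paper leaves implicit.
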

\begin{proof}
	Let $S$ be the set of all $k$-colorings of $C_n$ and let $S_i$ be the subset of $S$ consisting  colorings generated by  colored segments of length $n_{\{i\}}$.  Every coloring of $C_n$ which supports a rotation is generated by a colored segment $[d]$ for some $d|n$, $d\neq n$, and such a $d$ is a divisor of   $n_{\{i\}}$, for some $i\in \{1,2,\ldots t\}$. By Lemma \ref{d'|d},
	$\mathcal{N}_{n,k}=\bigcup_{i=1}^{t}S_i$.
	Also, for each $\emptyset \neq \mathcal{A} \subseteq \{1,2, \ldots, t\}$, the set $\bigcap_{i\in \mathcal{A}}{S_i}$ is the set of colorings in $S$ which are generated by some colored segment of length $n_\mathcal{A}$. Therefore, by Lemma \ref{d'|d},
	$|\bigcap_{i\in \mathcal{A}}{S_i}|=
	k^{n_{\mathcal{A}}}.$ 
By the principle of inclusion-exclusion, 
	\[ |\mathcal{N}_{n,k}| =\sum_{\emptyset \neq \mathcal{A} \subseteq \{1,2, \ldots, t\}}(-1)^{|\mathcal{A}|+1} k^{n_{\mathcal{A}}}.\]
	Finally, taking into account $\mathcal{A}=\emptyset$ and noting that $n_\emptyset=n$ results in:
		\[ |\mathcal{N}_{n,k}| =k^n -\sum_{ \mathcal{A} \subseteq \{1,2, \ldots, t\}}(-1)^{|\mathcal{A}|} k^{n_{\mathcal{A}}}.\]
\end{proof}



\begin{lemma}\label{number of automorphism supporting only one reflection}
	Let $n = p_1^{n_1}p_2^{n_2}\cdots p_t^{n_t}$ be the canonical decomposition of $n$ to prime factors. Then 	
	
	\[|\mathcal{M}_{n,k}| = \frac{n}{2}\sum_{\mathcal{A} \subseteq \{1,2, \ldots, t\}}(-1)^{|\mathcal{A}|}( k^{\lceil\frac{n_{\mathcal{A}}+1}{2}\rceil} + k^{\lfloor\frac{n_{\mathcal{A}}+1}{2}\rfloor}).\]

\end{lemma}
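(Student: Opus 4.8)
The plan is to count the colorings in $\mathcal{M}_{n,k}$ — those supporting exactly one reflection — by organizing them according to which reflection is supported, and then using inclusion--exclusion over the ``rotational part'' to remove those that support a rotation as well (hence more than one reflection by Lemma \ref{reflections and rotations of $C_n$}(b)). First I would fix a reflection $\rho$ (of which there are $n$ in $D_{2n}$; when $n$ is even, $n/2$ of type $\rho_i$ and $n/2$ of type $\rho_{i,i+1}$, while when $n$ is odd all $n$ are of type $\rho_i$) and let $R_\rho$ be the set of $k$-colorings supporting $\rho$. A coloring supporting $\rho$ and \emph{not} supporting any rotation supports \emph{only} $\rho$ among all reflections: indeed if it supported another reflection it would support a rotation by Lemma \ref{reflections and rotations of $C_n$}(a). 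So $\mathcal{M}_{n,k}$ is the disjoint union, over all $n$ reflections $\rho$, of $R_\rho \setminus (\text{colorings supporting a rotation})$, and therefore
\[
|\mathcal{M}_{n,k}| = \sum_{\rho} \bigl( |R_\rho| - |R_\rho \cap \mathcal{N}_{n,k}'| \bigr),
\]
where $\mathcal{N}_{n,k}'$ denotes all colorings supporting some rotation (so $\mathcal{N}_{n,k}'$ is $\mathcal{N}_{n,k}$ together with the constant-type colorings supporting a reflection and a rotation — in any case it is the set of colorings generated by a segment $[d]$ with $d\mid n$, $d\neq n$).

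The next step is to evaluate $|R_\rho \cap \mathcal{N}_{n,k}'|$ by inclusion--exclusion. A coloring supporting $\rho$ and some rotation is generated by a colored segment $[d]$ for some proper divisor $d$ of $n$, hence for some $d \mid n_{\{i\}}$. Exactly as in the proof of Lemma \ref{the-number-of-support-rotation}, writing $S_i^\rho$ for the colorings supporting $\rho$ and generated by a segment of length $n_{\{i\}}$, we get $R_\rho \cap \mathcal{N}_{n,k}' = \bigcup_{i=1}^t S_i^\rho$ and $\bigcap_{i \in \mathcal{A}} S_i^\rho$ is the set of colorings supporting $\rho$ that are generated by a segment of length $n_{\mathcal{A}}$. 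Now Corollary \ref{counting} gives the size of this intersection: it is $k^{\lceil (n_{\mathcal{A}}+1)/2\rceil}$ if $\rho$ is of type $\rho_i$, and $k^{\lfloor (n_{\mathcal{A}}+1)/2\rfloor}$ if $\rho$ is of type $\rho_{i,i+1}$. (For $\mathcal{A} = \emptyset$ we have $n_\emptyset = n$, and the count $k^{\lceil (n+1)/2\rceil}$, resp.\ $k^{\lfloor (n+1)/2\rfloor}$, is precisely $|R_\rho|$, again by Corollary \ref{counting} applied with $d = n$; this is the point that lets the $\mathcal{A}=\emptyset$ term absorb the standalone $|R_\rho|$.) Hence by inclusion--exclusion,
\[
|R_\rho| - |R_\rho \cap \mathcal{N}_{n,k}'| = \sum_{\mathcal{A} \subseteq \{1,\ldots,t\}} (-1)^{|\mathcal{A}|}\, k^{e(\rho,\mathcal{A})},
\]
where $e(\rho,\mathcal{A}) = \lceil (n_{\mathcal{A}}+1)/2\rceil$ or $\lfloor (n_{\mathcal{A}}+1)/2\rfloor$ according to the type of $\rho$.

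Finally I would sum over the $n$ reflections. When $n$ is odd, all $n$ reflections are of type $\rho_i$, and $\lceil (n_\mathcal{A}+1)/2\rceil = \lfloor (n_\mathcal{A}+1)/2\rfloor$ for every odd divisor $n_\mathcal{A}$ of the odd number $n$, so the sum over $\rho$ contributes a factor $n$, giving $\frac{n}{2}(k^{\lceil\cdot\rceil}+k^{\lfloor\cdot\rfloor})$ since the two exponents coincide. When $n$ is even, $n/2$ reflections contribute $k^{\lceil (n_\mathcal{A}+1)/2\rceil}$ each and the other $n/2$ contribute $k^{\lfloor (n_\mathcal{A}+1)/2\rfloor}$ each, so the total is again $\frac{n}{2}\sum_{\mathcal{A}}(-1)^{|\mathcal{A}|}(k^{\lceil (n_\mathcal{A}+1)/2\rceil}+k^{\lfloor (n_\mathcal{A}+1)/2\rfloor})$, as claimed. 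The one point needing care — the main obstacle — is justifying that $\bigcap_{i\in\mathcal{A}}S_i^\rho$ really is counted by Corollary \ref{counting}, i.e.\ that "supports $\rho$ and is generated by a segment of length $n_\mathcal{A}$" is exactly the hypothesis of that corollary; this requires observing (via Lemma \ref{d'|d}) that a coloring generated by a shorter segment is generated by the length-$n_\mathcal{A}$ segment, and that the reflection type is preserved when we pass between these segment lengths — which is precisely the content established in the proof of Lemma \ref{coloring-induced-on-cd}. Once that bookkeeping is pinned down, the inclusion--exclusion and the final summation over reflections are routine.
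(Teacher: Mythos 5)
Your proposal is correct and follows essentially the same route as the paper: fix a reflection, count the colorings supporting it but no rotation by inclusion--exclusion over the maximal divisors $n_{\{i\}}$ using Corollary \ref{counting} (with $\mathcal{A}=\emptyset$ absorbing the standalone term $|R_\rho|$), then sum over the $n$ reflections split by type, the floor and ceiling exponents coinciding when $n$ is odd. (A minor remark: your $\mathcal{N}_{n,k}'$ is just $\mathcal{N}_{n,k}$, since the paper defines $\mathcal{N}_{n,k}$ as the set of all colorings supporting some rotation.)
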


\begin{proof}
First, suppose  $X$ is the set of all $k$-colorings of $C_n$ supporting the reflection $\rho_1 \in \aut(C_n)$ and  $X_j$, $1 \leq j \leq n$, is the subset of $X$ consisting of $k$-colorings generated by the segment of length $n_{\{j\}}$. By Corollary \ref{counting} (a), we have $|X|=k^{\lceil \frac{n+1}{2}\rceil}.$ 
Note that a coloring $c$ in $X$ supports no reflection of $C_n$ other than $\rho_1$, if and only if $c \notin X_j$, for all $j$ (otherwise, contradicts Lemma \ref{reflections and rotations of $C_n$}). 
	 For $\emptyset \neq \mathcal{A} \subseteq \{1,2,\ldots, t\}$, the set $\bigcap_{j\in \mathcal{A}}{X_j}$ is the subset of $X$ consisting of  colorings generated by the segment of length $n_\mathcal{A}$. They also support  $\rho_1 \in \aut(C_n)$. Therefore, Corollary \ref{counting} (a) implies that 
	  $	|\bigcap_{j\in \mathcal{A}}{X_j}|
	  = k^{\lceil\frac{n_{\mathcal{A}}+1}{2}\rceil}$. 

We can now find the number of $k$-colorings of $C_n$ which support only $\rho_1$ and no more reflections (and of course no rotation, by Lemma \ref{reflections and rotations of $C_n$}. By the inclusion-exclusion principle,
$$|X\setminus(X_1\cup X_2\cup \ldots\cup  X_t)|=k^{\lceil\frac{n+1}{2}\rceil}-\sum_{\emptyset \neq \mathcal{A} \subseteq \{1,2, \ldots, t\}}(-1)^{|\mathcal{A}|+1} k^{\lceil\frac{n_{\mathcal{A}}+1}{2}\rceil}.$$
If we take into account $\mathcal{A}=\emptyset$ and note that $n_\emptyset=n$, the equality above becomes, 
$$|X\setminus(X_1\cup X_2\cup \ldots\cup  X_t)|=\sum_{ \mathcal{A} \subseteq \{1,2, \ldots, t\}}(-1)^{|\mathcal{A}|} k^{\lceil \frac{n_{\mathcal{A}}+1}{2}\rceil}.$$
	
	For each $1 \leq i \leq n$ we can use the same argument as $\rho_1$ for the reflection $\rho_i \in \aut(C_n)$. If $n$ is odd, there are no reflections of the form $\rho_{i,i+1}$. Since in this case all $n_{\mathcal{A}}$'s are odd we have the desired equality: 
	\[|\mathcal{M}_{n,k}|= 
	n \sum_{\mathcal{A} \subseteq \{1,2, \ldots, t\}}(-1)^{|\mathcal{A}|} k^{\lceil\frac{n_{\mathcal{A}}+1}{2}\rceil}=	 \frac{n}{2}\sum_{\mathcal{A} \subseteq \{1,2, \ldots, t\}}(-1)^{|\mathcal{A}|}( k^{\lceil\frac{n_{\mathcal{A}}+1}{2}\rceil} + k^{\lfloor\frac{n_{\mathcal{A}}+1}{2}\rfloor}).\]
	 
	 Now suppose that $n$ is even. Since $\rho_i=\rho_{\frac{n}{2}+i}$, the number of $k$-colorings of $C_n$ which support some $\rho_i$ and no more reflections is: 
	 	\[ \frac{n}{2} \sum_{\mathcal{A} \subseteq \{1,2, \ldots, t\}}(-1)^{|\mathcal{A}|} k^{\lceil\frac{n_{\mathcal{A}}+1}{2}\rceil}.\]

  In addition, a similar arguments and Lemma \ref{counting} (b) reveal that the number of $k$-colorings of $C_n$ which only support $\rho_{n,1}$ and no more reflections is equal to:
   	\[  \sum_{\mathcal{A} \subseteq \{1,2, \ldots, t\}}(-1)^{|\mathcal{A}|} k^{\lfloor\frac{n_{\mathcal{A}}+1}{2}\rfloor}.\]
   	Since $\rho_{i,i+1}=\rho_{\frac{n}{2}+i,\frac{n}{2}+i+1}$, the number of $k$-colorings of $C_n$ which support some $\rho_{i,i+1}$ and no more reflections is: 
   	\[ \frac{n}{2} \sum_{\mathcal{A} \subseteq \{1,2, \ldots, t\}}(-1)^{|\mathcal{A}|} k^{\lfloor\frac{n_{\mathcal{A}}+1}{2}\rfloor}.\] 
   	Therefore, 
   		\[|\mathcal{M}_{n,k}| = \frac{n}{2}\sum_{\mathcal{A} \subseteq \{1,2, \ldots, t\}}(-1)^{|\mathcal{A}|}( k^{\lceil\frac{n_{\mathcal{A}}+1}{2}\rceil} + k^{\lfloor\frac{n_{\mathcal{A}}+1}{2}\rfloor}).\]	
   		
   	The proof is now complete.
\end{proof}
Now all ingredients are ready to compute the distinguishing polynomials of cycles. 
\begin{theorem}
	Let $n = p_1^{n_1}p_2^{n_2}\cdots p_t^{n_t}$ be the canonical decomposition of $n$ to prime factors. Then,
	\begin{itemize}
		\item[(i)] 
		If $n$ is odd,
		\[\mathfrak{D}_k(C_n) =\sum_{\mathcal{A} \subseteq \{1,2, \ldots, t\}}(-1)^{|\mathcal{A}|}(k^{n_{\mathcal{A}}}- n k^{\frac{n_{\mathcal{A}}+1}{2}}).\]
		\item
		[(ii)] 
		If $n$ is even,
		\[\mathfrak{D}_k(C_n) =\sum_{\mathcal{A} \subseteq \{1,2, \ldots, t\}}(-1)^{|\mathcal{A}|}(k^{n_{\mathcal{A}}} - \frac{n}{2} k^{\lceil\frac{n_{\mathcal{A}}+1}{2}\rceil} - \frac{n}{2} k^{\lfloor\frac{n_{\mathcal{A}}+1}{2}\rfloor}).\]
	\end{itemize}
\end{theorem}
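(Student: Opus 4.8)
The plan is to simply assemble pieces that are already in place. Equation \ref{dk} expresses $\mathfrak{D}_k(C_n)$ as $k^n - |\mathcal{M}_{n,k}| - |\mathcal{N}_{n,k}|$, where, by Lemma \ref{reflections and rotations of $C_n$}, the sets $\mathcal{M}_{n,k}$ (colorings supporting exactly one reflection) and $\mathcal{N}_{n,k}$ (colorings supporting some rotation) are disjoint and together exhaust the non-distinguishing $k$-colorings of $C_n$. So the whole theorem reduces to substituting the closed forms obtained in Lemma \ref{the-number-of-support-rotation} and Lemma \ref{number of automorphism supporting only one reflection} into \ref{dk} and simplifying.

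First I would substitute $|\mathcal{N}_{n,k}| = k^n - \sum_{\mathcal{A}}(-1)^{|\mathcal{A}|}k^{n_{\mathcal{A}}}$ from Lemma \ref{the-number-of-support-rotation}; the two copies of $k^n$ cancel, leaving
\[\mathfrak{D}_k(C_n) = \sum_{\mathcal{A}\subseteq W}(-1)^{|\mathcal{A}|}k^{n_{\mathcal{A}}} - |\mathcal{M}_{n,k}|.\]
Then I would substitute the expression for $|\mathcal{M}_{n,k}|$ from Lemma \ref{number of automorphism supporting only one reflection} and merge the two sums indexed by subsets $\mathcal{A}$ of $W=\{1,2,\ldots,t\}$ into a single one. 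This produces part (ii) at once: the $\mathcal{A}$-th term becomes $(-1)^{|\mathcal{A}|}\bigl(k^{n_{\mathcal{A}}} - \frac{n}{2}k^{\lceil (n_{\mathcal{A}}+1)/2\rceil} - \frac{n}{2}k^{\lfloor (n_{\mathcal{A}}+1)/2\rfloor}\bigr)$.

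For part (i), I would observe that when $n$ is odd every divisor $n_{\mathcal{A}}$ of $n$ is odd, so $n_{\mathcal{A}}+1$ is even and $\lceil (n_{\mathcal{A}}+1)/2\rceil = \lfloor (n_{\mathcal{A}}+1)/2\rfloor = (n_{\mathcal{A}}+1)/2$. Hence the two half-integer exponents coincide, the combination $\frac{n}{2}k^{\lceil (n_{\mathcal{A}}+1)/2\rceil} + \frac{n}{2}k^{\lfloor (n_{\mathcal{A}}+1)/2\rfloor}$ collapses to $n\,k^{(n_{\mathcal{A}}+1)/2}$, and the formula in (ii) specializes to the one in (i).

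There is no real obstacle: all the substantive combinatorics has already been done — the partition of the non-distinguishing colorings via the dihedral-group facts in Lemma \ref{two different reflections}, the segment-counting in Lemma \ref{d'|d}, the transfer of reflection-support between $C_n$ and $C_d$ in Lemma \ref{coloring-induced-on-cd} and Corollary \ref{counting}, and the two inclusion–exclusion computations. The only point requiring a little care is keeping the ceiling/floor bookkeeping consistent between the odd and even cases, which is precisely the step isolated in the previous paragraph.
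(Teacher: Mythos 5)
Your proposal is correct and is essentially identical to the paper's own proof, which likewise just substitutes Lemma \ref{the-number-of-support-rotation} and Lemma \ref{number of automorphism supporting only one reflection} into Equation \ref{dk}; your explicit remark that for odd $n$ all $n_{\mathcal{A}}$ are odd, so the ceiling and floor exponents coincide and the two $\frac{n}{2}$-terms merge into $n\,k^{(n_{\mathcal{A}}+1)/2}$, is exactly the bookkeeping the paper leaves implicit.
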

\begin{proof}
Use Equation \ref{dk}, Lemma \ref{the-number-of-support-rotation} and Lemma \ref{number of automorphism supporting only one reflection}.
\end{proof}
	




\begin{corollary}	\label{prime p} For any odd prime $p$,
	\begin{itemize}
	\item 
	$ \mathfrak{D}_k(C_p) = k^p - pk^{\frac{p+1}{2}} + (p-1)k.$
 \item $ \mathfrak{D}_k(C_{p^2}) = k^{p^2} - p^2k^{\frac{p^2+1}{2}} -k^p+p^2k^{\frac{p+1}{2}}.$
	\end{itemize}
\end{corollary}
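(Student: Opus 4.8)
The plan is to obtain both formulas simply by specializing part~(i) of the preceding theorem, since $p$ and $p^2$ are both odd. In each case the canonical prime factorization involves a single prime, so $t=1$, the index set $W=\{1\}$ has only the two subsets $\emptyset$ and $\{1\}$, and the alternating sum in the theorem collapses to exactly two terms. Thus the whole task is to evaluate $n_{\mathcal{A}}$ on these two subsets, attach the sign $(-1)^{|\mathcal{A}|}$, and add.

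First, for $C_p$ I would take $n=p$ and $n_1=1$, so that $n_{\emptyset}=p$ and $n_{\{1\}}=p^{\,n_1-1}=p^0=1$. The $\mathcal{A}=\emptyset$ term of the theorem is then $k^{p}-p\,k^{\frac{p+1}{2}}$, and the $\mathcal{A}=\{1\}$ term is $-\bigl(k^{1}-p\,k^{\frac{1+1}{2}}\bigr)=-(k-pk)=(p-1)k$; summing gives the stated value of $\mathfrak{D}_k(C_p)$.

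Next, for $C_{p^2}$ I would take $n=p^2$ and $n_1=2$, so that $n_{\emptyset}=p^2$ and $n_{\{1\}}=p^{\,n_1-1}=p$. The $\mathcal{A}=\emptyset$ term is $k^{p^2}-p^2\,k^{\frac{p^2+1}{2}}$ and the $\mathcal{A}=\{1\}$ term is $-\bigl(k^{p}-p^2\,k^{\frac{p+1}{2}}\bigr)$; adding them yields the stated value of $\mathfrak{D}_k(C_{p^2})$.

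There is no genuine obstacle here: the only point requiring a moment's care is confirming that it is indeed part~(i) (the odd case, with exponent $\frac{n_{\mathcal{A}}+1}{2}$ rather than the ceiling/floor split) that applies, which is immediate because every divisor of an odd number is odd, hence each $n_{\mathcal{A}}$ is odd and $\lceil\frac{n_{\mathcal{A}}+1}{2}\rceil=\lfloor\frac{n_{\mathcal{A}}+1}{2}\rfloor=\frac{n_{\mathcal{A}}+1}{2}$. Everything else is the bookkeeping of a two-term alternating sum; alternatively, one could derive these two formulas directly from Equation~\ref{dk} together with Lemma~\ref{the-number-of-support-rotation} and Lemma~\ref{number of automorphism supporting only one reflection}, which in the single-prime case again reduce to the same two-term computations.
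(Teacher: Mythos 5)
Your proposal is correct and is exactly how the paper obtains this corollary: both formulas follow by specializing part~(i) of the theorem on $\mathfrak{D}_k(C_n)$ to $n=p$ and $n=p^2$ (single prime, so only $\mathcal{A}=\emptyset$ and $\mathcal{A}=\{1\}$ contribute, with $n_{\{1\}}=1$ and $n_{\{1\}}=p$ respectively), and your arithmetic for the two-term alternating sums is right. The remark that every $n_{\mathcal{A}}$ is odd, so the ceiling and floor coincide and case~(i) applies, is the only point of care and you handle it correctly.
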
 

\begin{example} \label{c4c6}\hfill \begin{itemize}
		\item 
	$\mathfrak{D}_k(C_4)=k^4-2k^3-k^2+2k$ (given in split form in Example \ref{example6ta}  )
	\item
 $\mathfrak{D}_k(C_{6}) =k^6-3k^4-4k^3+8k^2-2k.$
	\end{itemize} 
\end{example}



\section{Disconnected graphs, Complete Multipartite Graphs And Joins }\label{disc}
In contrast to the chromatic polynomial, the distinguishing polynomial of a disjoint union of graphs is not necessarily equal to the product of their individual polynomials, unless some obvious necessary  conditions occur.  However, we can compute the distinguishing polynomial of a disjoint union of graphs versus the distinguishing polynomials of the components. For a graph $G$,  denote the disjoint union of $m$ copies of $G$ by $mG$. 

\begin{lemma}\label{Dist-Polynomial-Self-Union}
	For every connected graph $G$: 
	\[\Phi_k(mG) =\frac{1}{m!}\Phi_k(G)_{(m)}=\frac{1}{m!} \prod_{i=0}^{m-1}( \Phi_k(G) - i).\] 
\end{lemma}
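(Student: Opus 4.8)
The plan is to reduce the count of non-equivalent distinguishing $k$-colorings of $mG$ to a combinatorial count over the equivalence classes of distinguishing $k$-colorings of $G$. Everything rests on the structure of $\aut(mG)$. Since $G$ is connected, the connected components of $mG$ are precisely the $m$ copies $G_1,\dots,G_m$ of $G$; hence every $\phi\in\aut(mG)$ permutes these copies according to some $\pi\in S_m$ and restricts to graph isomorphisms $G_i\to G_{\pi(i)}$, and conversely any such family of isomorphisms assembles to an automorphism of $mG$. Fixing once and for all an identification of each $G_i$ with $G$, a $k$-coloring $c$ of $mG$ is the same datum as a tuple $(c_1,\dots,c_m)$ of $k$-colorings of $G$, and for $\phi$ as above, $\phi$ supports $c$ precisely when $c_i=c_{\pi(i)}\circ g_i$ for the corresponding $g_i\in\aut(G)$; in particular $c_i$ is then equivalent to $c_{\pi(i)}$ in the sense of equation \ref{equivalent colorings}.

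First I would establish the criterion: a $k$-coloring $c=(c_1,\dots,c_m)$ of $mG$ is distinguishing if and only if each $c_i$ is a distinguishing $k$-coloring of $G$ and the colorings $c_1,\dots,c_m$ are pairwise non-equivalent. For the \emph{only if} direction: if some $c_i$ supports a nontrivial $f\in\aut(G)$, then $f$ on $G_i$ extended by the identity elsewhere is a nontrivial automorphism of $mG$ supporting $c$; and if $c_i$ is equivalent to $c_j$ for some $i\neq j$ via $h\in\aut(G)$ (so $c_i=c_j\circ h$), then the automorphism of $mG$ that sends $G_i$ to $G_j$ by $h$, sends $G_j$ to $G_i$ by $h^{-1}$, and fixes the remaining copies is nontrivial and supports $c$ --- a short check using $c_i=c_j\circ h$ shows color preservation on $G_i$ and on $G_j$. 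For the \emph{if} direction: if $\phi$ supports $c$, the induced permutation $\pi$ satisfies $[c_i]=[c_{\pi(i)}]$ for all $i$ by the remark above, so pairwise non-equivalence forces $\pi=\id$; then each $g_i$ is a color-preserving automorphism of $(G,c_i)$, hence trivial since $c_i$ is distinguishing, whence $\phi=\id$.

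Next I would count orbits. Since $\phi$ sends the tuple $(c_1,\dots,c_m)$ to a tuple whose $j$-th entry is a coloring of $G$ equivalent to $c_{\pi^{-1}(j)}$ (and any $\pi$ together with any choice of such equivalences can be realized), two distinguishing colorings of $mG$ are equivalent exactly when their collections of equivalence classes $\{[c_1],\dots,[c_m]\}$ agree as multisets. By the criterion these classes are pairwise distinct, so such a collection is an $m$-element subset of the set $\mathcal C$ of equivalence classes of distinguishing $k$-colorings of $G$, and $|\mathcal C|=\Phi_k(G)$; moreover every $m$-element subset of $\mathcal C$ arises, by picking representatives (which then form a distinguishing coloring of $mG$ by the criterion). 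This yields a bijection between non-equivalent distinguishing $k$-colorings of $mG$ and $m$-element subsets of $\mathcal C$, so $\Phi_k(mG)=\binom{\Phi_k(G)}{m}=\frac1{m!}\prod_{i=0}^{m-1}(\Phi_k(G)-i)$, with both sides equal to $0$ when $\Phi_k(G)<m$, as they should be.

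The only delicate point is the criterion in the second paragraph, and the difficulty there is purely bookkeeping: one must keep the direction of the equivalence relation in equation \ref{equivalent colorings} straight when transporting a coloring from one copy of $G$ to another, and verify carefully that the ``swap'' automorphism is genuinely color-preserving on both of the copies it interchanges. Once the structure of $\aut(mG)$, the criterion, and the orbit description are in place, the final identity is immediate from $|\mathcal C|=\Phi_k(G)$.
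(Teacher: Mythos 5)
Your proof is correct, and it arrives at the same count as the paper, $\Phi_k(mG)=\binom{\Phi_k(G)}{m}$, but by a more complete route. The paper's own proof is a quick sequential count: choose a class of distinguishing colorings for $G_1$ ($\Phi_k(G)$ ways), one non-equivalent to it for $G_2$ ($\Phi_k(G)-1$ ways), and so on, then divide by $m!$ because the copies are interchangeable; it leaves implicit exactly the two points you prove. Namely, you make explicit (i) the structure $\aut(mG)\cong \aut(G)\wr S_m$ (this is where connectedness of $G$ enters) together with the criterion that $(c_1,\dots,c_m)$ is distinguishing if and only if each $c_i$ is distinguishing on $G$ and the $c_i$ are pairwise non-equivalent --- including the verification that the ``swap'' automorphism built from an equivalence $c_i=c_j\circ h$ really preserves colors on both copies --- and (ii) the bijection between equivalence classes of distinguishing colorings of $mG$ and $m$-element subsets of the set of classes of $G$, which is precisely what justifies that the division by $m!$ is exact. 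Your formulation also handles the degenerate case $\Phi_k(G)<m$ (both sides vanish) without comment being needed. So the counting idea is the same, but your argument supplies the wreath-product decomposition, the distinguishing criterion, and the orbit-to-subset correspondence that the paper's informal choose-then-divide argument glosses over.
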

\begin{proof}
	Show the copies of $G$ by $G_1,G_2,\cdots ,G_m$. There are $\Phi_k(G)$ nonequivalent distinguishing $k$-colorings of $G_1$. Let $c$ be a distinguishing $k$-coloring of $G_1$. To avoid mapping of $G_1$ and $G_2$ onto each other, we must have a coloring of $G_2$ nonequivalent to $c$. So, there exist $\Phi_k(G)-1$ nonequivalent distinguishing colorings for $G_2$. Similarly, we must avoid mappings of $G_1$, $G_2$ and $G_3$ onto each other. Hence, there exist $\Phi_k(G)-2$ nonequivalent distinguishing $k$-colorings for $G_3$; and so on. Therefore, there are
	$\prod_{i=0}^{m-1}( \Phi_k(G) - i)=\Phi_k(G)_{(m)}$  nonequivalent distinguishing $k$-colorings for $mG$, if the copies of $G$ are considered different objects. But the copies are the same objects, so we have to divide	$\prod_{i=0}^{m-1}( \Phi_k(G) - i)=\Phi_k(G)_{(m)}$ by $m!$ to find  $\Phi_k(mG).$
\end{proof}

In order to compute $\mathfrak{D}_k(mG)$, we also need the following known lemma, which can easily be verified.

\begin{lemma}\label{autmg}
	For any connected graph $G$, $|\aut(mG)|=m!|\aut(G)|^m$.	\hfill $\Box   $
\end{lemma}

The previous two lemmas and \ref{d-phi} gives $\mathfrak{D}_k(mG)$, whenever $G$ is connected.
\begin{corollary}\label{dkmg}
	For a connected graph $G$, $$\mathfrak{D}_k(mG)=\prod_{i=0}^{m-1}(\mathfrak{D}_k(G)-i|\aut(G)|) $$
	
\end{corollary}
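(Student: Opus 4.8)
The plan is to combine the two preceding lemmas with the fundamental identity \eqref{d-phi}, which expresses $\mathfrak{D}_k$ in terms of $\Phi_k$ and the order of the automorphism group. First I would write $\mathfrak{D}_k(mG) = \Phi_k(mG)\,|\aut(mG)|$ using \eqref{d-phi} applied to the graph $mG$. Then I would substitute the value of $\Phi_k(mG)$ from Lemma \ref{Dist-Polynomial-Self-Union} and the value of $|\aut(mG)|$ from Lemma \ref{autmg}, getting
\[
\mathfrak{D}_k(mG) = \frac{1}{m!}\Big(\prod_{i=0}^{m-1}\big(\Phi_k(G)-i\big)\Big)\cdot m!\,|\aut(G)|^{m}
= \Big(\prod_{i=0}^{m-1}\big(\Phi_k(G)-i\big)\Big)\,|\aut(G)|^{m}.
\]

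Next I would distribute one factor of $|\aut(G)|$ into each of the $m$ terms of the product: since there are exactly $m$ factors in $\prod_{i=0}^{m-1}(\Phi_k(G)-i)$ and exactly $m$ copies of $|\aut(G)|$ in $|\aut(G)|^{m}$, we may pair them up to obtain
\[
\mathfrak{D}_k(mG) = \prod_{i=0}^{m-1}\big(\Phi_k(G)-i\big)\,|\aut(G)|
= \prod_{i=0}^{m-1}\big(\Phi_k(G)\,|\aut(G)| - i\,|\aut(G)|\big).
\]
Finally, applying \eqref{d-phi} once more, this time to the connected graph $G$ itself, we have $\Phi_k(G)\,|\aut(G)| = \mathfrak{D}_k(G)$, and substituting this into the last display yields exactly $\mathfrak{D}_k(mG) = \prod_{i=0}^{m-1}\big(\mathfrak{D}_k(G) - i\,|\aut(G)|\big)$, as claimed.

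This argument is essentially bookkeeping, so I do not anticipate a genuine obstacle; the one point to be careful about is that \eqref{d-phi} is being invoked for two different graphs ($mG$ and $G$), and that its hypotheses hold in both cases — in particular the connectedness assumption in Lemmas \ref{Dist-Polynomial-Self-Union} and \ref{autmg} is exactly what we are given, and \eqref{d-phi} itself has no connectedness restriction. One should also note in passing that when $m=1$ the formula degenerates correctly to $\mathfrak{D}_k(G)$ (empty shifts), and that the displayed product is indeed a polynomial in $k$, consistent with the general theory, since $\mathfrak{D}_k(G)$ is.
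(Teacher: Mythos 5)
Your proposal is correct and follows essentially the same route as the paper: apply identity \eqref{d-phi} to $mG$, substitute Lemma \ref{Dist-Polynomial-Self-Union} and Lemma \ref{autmg}, cancel the $m!$, and fold one factor $|\aut(G)|$ into each term of the product before using \eqref{d-phi} again for $G$. The only difference is that you spell out the bookkeeping (and the sanity checks for $m=1$) more explicitly than the paper's one-line computation.
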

\begin{proof}
	By   Lemma \ref{autmg} and Corollary \ref{dkmg}, $$\mathfrak{D}_k(mG)=\Phi_k(mG)m!|\aut(G)|^m=\frac{1}{m!}\Phi_k(G)_{(m)}m!|\aut(G)|^m=\prod_{i=0}^{m-1}(\mathfrak{D}_k(G)-i|\aut(G)|).$$
\end{proof}

Now, gathering the results above, everything is ready to compute the distinguishing polynomial of a disjoint union versus distinguishing polynomials of its components.

\begin{theorem} \label{disjoin union}
	Let $G_1,G_2,\cdots,G_t$ be mutually non-isomorphic connected graphs and   $m_1$, $m_2$, $\cdots,m_t$ be positive integers. Then 
	\begin{equation}
	\mathfrak{D}_k(\bigcup_{i=1}^{t}m_iG_i)=\prod_{i=1}^{t}\mathfrak{D}_k(m_iG_i)
	=\prod_{i=1}^{t}\prod_{j=0}^{m_i-1}(\mathfrak{D}_k(G_i)-j|\aut(G_i)|).
	\end{equation}
\end{theorem}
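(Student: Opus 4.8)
The plan is to reduce the general statement to the single-family case already handled in Corollary~\ref{dkmg}, using the fact that automorphisms cannot mix non-isomorphic components. First I would observe that if $H_1,\dots,H_t$ are graphs with no common isomorphism type appearing in two different $H_i$'s (here $H_i=m_iG_i$), then every automorphism of $\bigcup_i H_i$ restricts to an automorphism of each $H_i$ separately, so $\aut(\bigcup_i H_i)=\prod_i\aut(H_i)$, and a $k$-coloring is distinguishing if and only if its restriction to each $H_i$ is distinguishing. Consequently $\mathfrak{D}_k(\bigcup_i H_i)=\prod_i \mathfrak{D}_k(H_i)$: a distinguishing coloring of the union is exactly a choice of a distinguishing coloring on each piece.

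Next I would make that multiplicativity precise at the level of the counting functions. The cleanest route is through $\Phi_k$ and Equation~\ref{d-phi}: since $|\aut(\bigcup_i m_iG_i)|=\prod_i|\aut(m_iG_i)|$ by Lemma~\ref{autmg}, it suffices to show $\Phi_k(\bigcup_i m_iG_i)=\prod_i\Phi_k(m_iG_i)$, i.e. that non-equivalent distinguishing colorings of the union correspond bijectively to tuples of non-equivalent distinguishing colorings of the parts. This bijection is: given a distinguishing coloring $c$ of the union, record the equivalence class of $c|_{m_iG_i}$ for each $i$; conversely, a tuple of classes glues to a coloring of the union whose equivalence class is well-defined because $\aut$ splits as the product. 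Combining with $\mathfrak{D}_k=\Phi_k|\aut|$ gives the first equality $\mathfrak{D}_k(\bigcup_i m_iG_i)=\prod_i\mathfrak{D}_k(m_iG_i)$.

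Finally I would invoke Corollary~\ref{dkmg}, which expresses $\mathfrak{D}_k(m_iG_i)=\prod_{j=0}^{m_i-1}(\mathfrak{D}_k(G_i)-j|\aut(G_i)|)$ for each connected $G_i$, and substitute to obtain the stated double product. Strictly, one should note that the two factorizations $\mathfrak{D}_k(\bigcup_i m_iG_i)=\prod_i\mathfrak{D}_k(m_iG_i)$ and the expansion of each $\mathfrak{D}_k(m_iG_i)$ are of genuinely different natures: the former separates \emph{distinct} isomorphism types (where automorphisms never interact), the latter handles \emph{identical} copies (where automorphisms permute the copies, which is why Corollary~\ref{dkmg} is not simply $\mathfrak{D}_k(G_i)^{m_i}$).

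I expect the only real subtlety to be the justification that $\aut(\bigcup_i m_iG_i)=\prod_i\aut(m_iG_i)$ when the $G_i$ are mutually non-isomorphic and connected: an automorphism sends each connected component to a connected component of the same isomorphism type, and since the only components isomorphic to a component of $m_iG_i$ are the $m_i$ copies of $G_i$ themselves, the automorphism preserves the vertex set of each $m_iG_i$. Everything after this structural observation is bookkeeping with the already-established identities~\ref{d-phi}, Lemma~\ref{autmg}, and Corollary~\ref{dkmg}.
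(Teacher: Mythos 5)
Your proposal is correct and follows essentially the same route as the paper: observe that since the $G_i$ are mutually non-isomorphic and connected, every automorphism of $\bigcup_{i=1}^{t}m_iG_i$ maps each block $m_iG_i$ onto itself, so distinguishing colorings factor across the blocks and $\mathfrak{D}_k$ multiplies, after which Corollary~\ref{dkmg} gives the second equality. Your extra detour through $\Phi_k$, Lemma~\ref{autmg} and Equation~\ref{d-phi} is sound but not needed beyond the direct counting argument the paper uses.
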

\begin{proof}
	The proof is plain. For $i\neq j$, no automorphism can map a copy of $G_i$ to a copy of $G_j$.
	In other words, any automorphism of $\bigcup_{i=1}^{t}m_iG_i$ maps $m_iG_i$, $1\leq i\leq t$, onto itself, hence the left equality holds.  Write the right equality  by Corollary \ref{dkmg}.  
\end{proof}
As a corollary,  distinguishing polynomials of complete multipartite graphs are derived.

\begin{theorem}
Suppose that $n_i$, $1\leq i\leq t$ are distinct positive integers and  $H$ is the complete multipartite graph with $m_i$ parts of order $n_i$, $1\leq i\leq t$. Then 
$$\mathfrak{D}_k(H)=\prod_{i=1}^{t}\prod_{j=0}^{m_i-1}
(k_{(n_i)}-jn_i!)$$
\end{theorem}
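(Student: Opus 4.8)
The plan is to derive this as a direct corollary of Theorem \ref{disjoin union} by identifying the complete multipartite graph $H$ with a disjoint union of complete graphs under complementation. First I would observe that the complement $H^c$ of the complete multipartite graph with $m_i$ parts of order $n_i$ (for $1\le i\le t$) is precisely the disjoint union $\bigcup_{i=1}^{t} m_i K_{n_i}$: two vertices are adjacent in $H^c$ exactly when they lie in the same part of $H$, so each part becomes a clique and distinct parts are non-adjacent. Since the $n_i$ are distinct positive integers, the complete graphs $K_{n_1},\dots,K_{n_t}$ are mutually non-isomorphic connected graphs, so Theorem \ref{disjoin union} applies to $H^c$.

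Next I would invoke the fact, noted right after the definition of the distinguishing polynomial, that $\mathfrak{D}_k(G)=\mathfrak{D}_k(G^c)$ for every graph $G$, because $\aut(G)=\aut(G^c)$. Hence
\[
\mathfrak{D}_k(H)=\mathfrak{D}_k(H^c)=\mathfrak{D}_k\!\left(\bigcup_{i=1}^{t} m_i K_{n_i}\right)
=\prod_{i=1}^{t}\prod_{j=0}^{m_i-1}\bigl(\mathfrak{D}_k(K_{n_i})-j\,|\aut(K_{n_i})|\bigr),
\]
the last equality being Theorem \ref{disjoin union} with $G_i=K_{n_i}$. Finally I would substitute the two elementary evaluations $\mathfrak{D}_k(K_{n_i})=k_{(n_i)}=k(k-1)\cdots(k-n_i+1)$ from item 1 of Example \ref{example6ta}, and $|\aut(K_{n_i})|=n_i!$, to obtain exactly
\[
\mathfrak{D}_k(H)=\prod_{i=1}^{t}\prod_{j=0}^{m_i-1}\bigl(k_{(n_i)}-j\,n_i!\bigr).
\]

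There is essentially no obstacle here: the entire content has been front-loaded into Theorem \ref{disjoin union} and Corollary \ref{dkmg}, and the complementation trick is standard. The only point requiring a word of care is the claim that $K_{n_i}$ is the right component to plug in — i.e.\ verifying the complement identity $H^c=\bigcup_i m_i K_{n_i}$ — and noting that a single-vertex part contributes the (still valid) factor with $n_i=1$, $K_1$, consistent with item 5 of Example \ref{example6ta}. I would state the complement identity explicitly as the one substantive line of the proof and let the rest follow by citation.
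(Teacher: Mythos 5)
Your proposal is correct and follows exactly the paper's own argument: complement $H$ to get $\bigcup_{i=1}^{t} m_i K_{n_i}$, use $\mathfrak{D}_k(H)=\mathfrak{D}_k(H^c)$, and apply Theorem \ref{disjoin union} together with $\mathfrak{D}_k(K_{n_i})=k_{(n_i)}$ and $|\aut(K_{n_i})|=n_i!$. Nothing is missing; your extra remark about single-vertex parts is a harmless addition beyond what the paper states.
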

\begin{proof}
As we noted earlier,  the distinguishing polynomials of a graph and its complement are the same. The complement of the complete multipartite $H$ is the disjoint union $\bigcup_{i=1}^{t}m_iK_{n_i}$
and $|\aut(K_{n_i})|=n_i!$. Now Theorem \ref{disjoin union} gives the result. 
\end{proof}
\begin{example}\hfill
\begin{itemize}
	\item $\mathfrak{D}_k(K_{2,3})=\mathfrak{D}_k(K_2\cup K_3)=k^2(k-1)^2(k-2) =k^5-4k^4+5k^3-2k^2$.
	\item $\mathfrak{D}_k(K_{3,3})=\mathfrak{D}_k(2K_3)=k(k-1)(k-2)[k(k-1)(k-2)-6]=
	k(k-1)(k-2)(k-3)(k^2+2)=k^6-6k^5+13k^4-18k^3+22k^2-12k$.
	\item$\mathfrak{D}_k(K_{2,2,2})=\mathfrak{D}_k(3K_2)=k(k-1)[k(k-1)-2][k(k-1)-4]=
	k(k-1)(k-2)^2(k+1)(k^2-k-4)=k^6-3k^5-3k^4+11k^3+2k^2-8k$.
	\end{itemize}
\end{example} 

Finally, if we consider the join $G \vee H$ of two graphs $G$ and $H$, we have 
$\mathfrak{D}_k(G \vee H)=\mathfrak{D}_k(G^c \cup H^c)$, which can be computed by Theorem \ref{disjoin union}. For example, the distinguishing polynomial of a wheel is at hand, since a wheel is the join of a cycle with a single vertex. So is for a fan, the join of a path and a single vertex.

\section{  Conclusion  }\label{conclusion}
Like all over mathematics, the notion distinguishing polynomial brings up many new problems and aspects. After paths and cycles, we are  interested in knowing distinguishing polynomials of trees, hypercubes and Kneser graphs, ..., specially the Petersen graph. On the other hand, the theoretical aspects of this polynomial are also important. In contrast to the chromatic polynomial, we do not know exactly how the coefficients of $\mathfrak{D}_k(G)$ are related to the graph structure. 

In particular, the multiplicity of zero  as a root of $\mathfrak{D}_k(G)$ seems to be an important invariant of the graph $G$.     
Roughly speaking, any increase in  multiplicity of zero results in less symmetry and vice versa. For the moment, we have no exact interpretation of such multiplicity versus the graph structure. However, we show that the  multiplicity of zero in the distinguishing polynomial of $G$ is at least the number of its orbits. Recall that an orbit $\mathcal{O}$ of a graph $G$ is a maximal subset of vertices of $G$ for which if $x,y\in \mathcal{O}$, then there exists $f\in \aut(G)$  such that $f(x)=y$. The vertex set of $G$ is partitioned into its orbits.

\begin{definition}
	We call two distinguishing colorings  $c$ and $c'$ of $G$ similar if they induce the same partition on each orbit of $G$. That is, if $\mathcal{O}$ is any orbit of $G$ and $x,y\in \mathcal{O}$, then $c(x)=c(y)$ if and only if $c'(x)=c'(y)$.  
\end{definition}
\begin{example} 
	Any two distinguishing colorings of the path $P_3$ as well as complete graph $K_n$ and any asymmetric graph are similar. On the other hand, consider the path $P_4$ with vertices $1,2,3,4$.  Let $\alpha$, $\beta$, $\gamma$ and $\delta$ be different colors. The following three distinguishing colorings $c$, $c'$ and $c''$ are mutually non-similar: 
	$c(1,2,3,4)=(\alpha,\beta,\gamma,\delta)$, $c'(1,2,3,4)=(\alpha,\alpha,\alpha,\gamma)$ and 
	$c''(1,2,3,4)=(\alpha,\alpha,\beta,\alpha)$. The coloring $c'''(1,2,3,4)=(\alpha,\beta,\alpha,\beta)$ is similar to $c$. In fact, each distinguishing coloring of $P_4$ is  similar  to one of the colorings $c, c', c''$.

	\end{example}

	Note that similarity is an equivalence relation on the set of all distinguishing $k$-colorings of $G$.  Let $c$ be a distinguishing coloring of $G$. The number of colorings similar to $c$ can be computed as follows. If $\mathcal{O}$ is an orbit of $G$ partitioned as $X_1,X_2,\cdots, X_t$ by $c$,  the vertices in $\mathcal{O}$ can be colored in
	\begin{equation}\label{in an orbit}
		k(k-1)\cdots (k-t+1)
	\end{equation} 
	number of ways; and the number of distinguishing $k$-colorings similar to $c$ is equal to the product of products of the form \ref{in an orbit} for all orbits. For each orbit we have a factor $k$, hence there are $k^q g_c(k)$  distinguishing $k$-coloring of $G$ similar to $c$, where  $g_c(k)$ is a suitable nonzero polynomial. The set of all distinguishing $k$-colorings of $G$  is partitioned into equivalence classes of similarity, and the cardinality of each class is divisible by $k^q$. Therefore, $\mathfrak{D}_k(G)$ is divisible by $k^q$. Aggregating, we have proved the following theorem. 
	
	\begin{theorem}
		For any graph $G$ with $q$ orbits, the multiplicity of zero as a root of $\mathfrak{D}_k(G)$ is at least $q$. \hfill $\Box$
	\end{theorem}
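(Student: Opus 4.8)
The plan is to show that $k^q$ divides the polynomial $\mathfrak{D}_k(G)$, where $q$ is the number of orbits; since $k^q \mid \mathfrak{D}_k(G)$ in $\mathbb{Z}[k]$ immediately gives that $0$ is a root of multiplicity at least $q$. The key device is the \emph{similarity} equivalence relation on distinguishing colorings: two distinguishing $k$-colorings are similar if they induce the same set-partition on every orbit of $G$. The main structural fact I would establish is that each similarity class has cardinality divisible by $k^q$, and then summing over classes transfers this divisibility to $\mathfrak{D}_k(G)$.

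First I would fix a distinguishing coloring $c$ of $G$ and count the colorings similar to it. For a single orbit $\mathcal{O}$ that $c$ partitions into $t$ nonempty blocks $X_1,\dots,X_t$, a coloring similar to $c$ must assign one color to each block, all distinct, so there are $k(k-1)\cdots(k-t+1)$ choices for that orbit; crucially each such falling factorial contains a factor of $k$. Since the blocks on different orbits can be colored independently (similarity is defined orbit by orbit), the number of colorings similar to $c$ is the product over all $q$ orbits of these falling factorials, hence equals $k^q g_c(k)$ for some polynomial $g_c(k)$ with $g_c(k) \ne 0$ — this is the content of the displayed counting formula \ref{in an orbit} in the preceding paragraph of the excerpt. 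Here I would need to note the (essentially immediate) point that a coloring similar to a distinguishing coloring is again distinguishing: similarity preserves, on each orbit, exactly which pairs of vertices get equal colors, and whether a coloring is distinguishing depends only on that data, since any automorphism maps each orbit to itself and a color-preserving automorphism is constrained precisely by the induced partitions.

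Next I would observe that similarity is an equivalence relation on the set of all distinguishing $k$-colorings of $G$ (reflexivity, symmetry and transitivity are clear from the "if and only if" in the definition), so that set is the disjoint union of its similarity classes. By the previous paragraph every class has size divisible by $k^q$, so $\mathfrak{D}_k(G)$, being the sum of the class sizes, is divisible by $k^q$ as an integer for every positive integer $k$. A polynomial that is divisible by $k^q$ at all sufficiently many integers is divisible by $k^q$ in $\mathbb{Z}[k]$ (or one simply notes that $\mathfrak{D}_k(G)/k^q$ is again a polynomial because each $k(k-1)\cdots(k-t+1)/k = (k-1)\cdots(k-t+1)$ is a polynomial, so the whole expression is a $\mathbb{Z}$-linear combination of polynomials times $k^q$), whence $0$ is a root of $\mathfrak{D}_k(G)$ with multiplicity at least $q$.

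The only delicate point — and the step I expect to require the most care — is verifying that a coloring similar to a distinguishing coloring is still distinguishing, so that the similarity classes genuinely partition the set whose size is $\mathfrak{D}_k(G)$ rather than some larger set. This rests on the fact that $\aut(G)$ permutes each orbit within itself, so whether an automorphism is color-preserving is detected purely by the partitions induced on the orbits; since similar colorings induce identical such partitions, they have the same color-preserving automorphisms and in particular the same (trivial) stabilizer. Once this is in hand, the counting and the divisibility argument are routine, and the theorem follows.
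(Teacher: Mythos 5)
Your proposal is correct and follows essentially the same route as the paper: partition the distinguishing $k$-colorings into similarity classes, count each class as a product over the $q$ orbits of falling factorials $k(k-1)\cdots(k-t+1)$, extract the factor $k^q$, and conclude divisibility of $\mathfrak{D}_k(G)$ by $k^q$. Your explicit check that a coloring similar to a distinguishing coloring has the same (trivial) stabilizer is a detail the paper leaves implicit, and it is a welcome addition rather than a deviation.
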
   
	
	For most of the graphs studied in previous sections, namely paths, complete graphs, complete multipartite graphs, asymmetric graphs  and cycles of odd prime lengths, the multiplicity of zero as a root of  $\mathfrak{D}_k(G)$ is exactly the number of their orbits. However,  as corollary \ref{prime p} shows, for an odd prime $p$, while the cycle  $C_{p^2}$ is vertex transitive and has only one orbit, the multiplicity of zero in  $\mathfrak{D}_k(C_{p^2})$ exceeds one. It also shows that there is no limit for the multiplicity of zero in $\mathfrak{D}_k(G)$ even when $G$ is vertex transitive. Other roots are also interesting, possible negative integers, real or complex roots.

	Finally,  as stated, knowing the automorphism groups and orbits play important roles in this theory; nevertheless, they are not sufficient to determine the distinguishing polynomials, as the graphs in previous  sections demonstrate. In addition to employing the automorphisms and orbits, different techniques were used to determine the distinguishing polynomials of illustrated graphs. These observations suggest that the complexity of determining $\mathfrak{D}_k(G)$ is NP, although it remains to be seen whether this statement is true.





					\bibliographystyle{plain}

			\end{document}